\numberwithin{equation}{section}
\newtheorem{theorem}{\indent Theorem}[section]
\newtheorem{lemma}{\indent Lemma} [section]
\newtheorem{corollary}{\indent Corollary} [section]
\newtheorem{definition}{\indent Definition} [section]
\newtheorem{remark}{\indent Remark} [section]
  \theoremstyle{nonumberplain}
  \newtheorem{proof}{\indent Proof}
\DeclareMathOperator*{\essinf}{ess\, inf}
\DeclareMathOperator*{\esssup}{ess\, sup}
\def\rr{\mathbb{R}}
\def\rn{{\rr}^n}
\def\loc{{\rm loc}}
\begin{document}

\title{\bf Some notes on commutators of the fractional
maximal function on variable Lebesgue spaces
   \footnotetext{E-mail:  puzhang@sohu.com (Pu Zhang);
     zengyan@hpu.edu.cn (Zengyan Si); jl-wu@163.com (J. L. Wu)}
     }

\author[1]{Pu Zhang\thanks{Corresponding author: Pu Zhang}}
\author[2]{Zengyan Si}
\author[1]{Jianglong Wu}

\affil[1]{\small\it Department of Mathematics, Mudanjiang Normal
          University, Mudanjiang 157011, P. R. China}

\affil[2]{\small\it School of Mathematics and Information Science,
Henan Polytechnic University,

         Jiaozuo 454000, P. R. China}

\date{ }
\maketitle

\begin{center}\begin{minipage}{14.5cm}

{\bf Abstract.} Let $0<\alpha<n$ and $M_{\alpha}$ be the fractional
maximal function. The nonlinear commutator of $M_{\alpha}$ and a
locally integrable function $b$ is given by
$[b,M_{\alpha}](f)=bM_{\alpha}(f)-M_{\alpha}(bf)$. In this paper,
we mainly give some necessary and sufficient conditions for
the boundedness of $[b,M_{\alpha}]$ on variable Lebesgue spaces when
$b$ belongs to Lipschitz or $BMO(\rn)$ spaces, by which some new
characterizations for certain subclasses of Lipschitz and $BMO(\rn)$
spaces are obtained.

\medskip
{\bf Keywords.} Fractional maximal function, nonlinear commutator,
variable Lebesgue space, Lipschitz space, $BMO$ space.

\medskip
{\bf Mathematics Subject Classification.} 42B25, 42B20, 42B35, 46E30

\end{minipage}
\end{center}

\medskip

\section{Introduction and Main Results}

Let $T$ be the classical singular integral operator. In 1976,
Coifman, Rochberg and Weiss \cite{crw} studied the commutator
generated by $T$ and a function $b\in{BMO(\rn)}$ as follows
\begin{equation} \label{equ.1.1}    
[b,T](f)(x)=T\big((b(x)-b(\cdot))f(\cdot)\big)(x)
  =b(x)T(f)(x)-T(bf)(x).
\end{equation}

A well-known result states that $[b,T]$ is bounded on $L^p(\rn)$ for
$1<p<\infty$ if and only if $b\in{BMO(\rn)}$. The sufficiency was
obtained by Coifman, Rochberg and Weiss \cite{crw} and the necessity
was proved by Janson \cite{j}. Moreover, Janson also gave some
characterizations of the Lipschitz space
${\dot{\Lambda}}_{\beta}(\rn)$ (see Definition \ref{def.lip} below)
via commutator $[b,T]$ in \cite{j} and proved that $[b,T]$ is
bounded from $L^p(\rn)$ to $L^q(\rn)$, for $1<p<n/\beta$,
$1/p-1/q=\beta/n$ and $0<\beta<1$, if and only if
$b\in{\dot{\Lambda}_{\beta}(\rn)}$ (see also Paluszy\'nski
\cite{p}).

As usual, a cube $Q\subset \rn$ always means its sides parallel to
the coordinate axes. Denote by $|Q|$ the Lebesgue measure and
$\chi_Q$ the characteristic function of $Q$. For $f\in
{L}_{\loc}^{1}(\rn)$, we write
$$f_Q=\frac1{|Q|}\int_Q f(x)dx.
$$

Let $0\le\alpha<n$ and $f\in {L^1_{\mathrm{loc}}(\rn)}$, the
fractional maximal function $M_{\alpha}$ is given by
$$M_{\alpha}(f)(x)=\sup_{Q\ni x} \frac{1}{|Q|^{1-{\alpha}/n}}
\int_{Q} |f(y)| dy,
$$
where the supremum is taken over all cubes $Q\subset\rn$ containing
$x$. When $\alpha=0$, we simply write $M$ instead of $M_{0}$, which
is exactly the Hardy-Littlewood maximal function.

Similar to (\ref{equ.1.1}), we can define two different kinds of
commutators of the fractional maximal function as follows.

\begin{definition}       \label{def.com}
Let $0\le\alpha<n$ and $b$ be a locally integrable function. The
maximal commutator of $M_{\alpha}$ and $b$ is given by
$$M_{\alpha,b}(f)(x)=\sup_{Q\ni x} \frac{1}{|Q|^{1-{\alpha}/n}}
\int_{Q} |b(x)-b(y)||f(y)| dy,
$$
where the supremum is taken over all cubes $Q\subset \rn$
containing $x$.

The nonlinear commutator of $M_{\alpha}$ and $b$ is defined by
$$[b,M_{\alpha}](f)(x)=b(x)M_{\alpha}(f)(x)-M_{\alpha}(bf)(x).
$$
\end{definition}

When $\alpha=0$, we simply denote by $M_b:=M_{0,b}$ and
$[b,M]:=[b,M_{0}]$.

We call $[b,M_{\alpha}]$ the nonlinear commutator because it is not
even a sublinear operator, although the commutator $[b,T]$ is a
linear one. We would like to remark that the nonlinear commutator
$[b,M_{\alpha}]$ and the maximal commutator $M_{\alpha,b}$
essentially differ from each other. For example, $M_{\alpha,b}$ is
positive and sublinear, but $[b,M_{\alpha}]$ is neither positive nor
sublinear.

The mapping property of $[b,M_{\alpha}]$ has been extensively
studied. See \cite{agkm, bmr, dgh, fj, gma, gdh, ms, xie, zhp1,
zhp2, zhw1, zhw2, zhw3, zws} for instance. There are some
applications of nonlinear commutators in Analysis. For example,
$[b,M]$ can be used in studying the products of functions in $H^1$
and $BMO$ (see \cite{bijz} for instance).

In 1990, by using the real interpolation techniques, Milman and
Schonbek \cite{ms} obtained a commutator result, by which they
obtained the $L^p$-boundedness of $[b,M]$ and
$[b,M_{\alpha}](0<\alpha<n)$ when $b\in{BMO(\rn)}$ and $b\ge0$. In
2000, Bastero, Milman and Ruiz \cite{bmr} considered the necessary
and sufficient conditions for the boundedness of $[b,M]$ in
$L^p(\rn)$ when $b$ belongs to $BMO(\rn)$. In 2009, Zhang and Wu
\cite{zhw1} extended their results to commutators of the fractional
maximal function. The results in \cite{bmr} and \cite{zhw1} were
extended to variable Lebesgue spaces in \cite{zhw2} and \cite{zhw3}.

Recently, Zhang \cite{zhp1} studied the commutator $[b,M]$ when $b$
belongs to Lipschitz spaces. Some necessary and sufficient
conditions for the boundedness of $[b,M]$ on Lebesgue and Morrey
spaces are given. Some of the results were extended to variable
Lebesgue spaces in \cite{zhp2} and to the context of Orlicz spaces
in \cite{gd}, \cite{gdh} and \cite{zws}.

Motivated by the papers mentioned above,
in this paper, we mainly study the mapping properties of
$[b,M_{\alpha}]$ in variable Lebesgue spaces when $b$ belongs to
Lipschitz or $BMO(\rn)$ spaces. More precisely, we will give some
new kind of necessary and sufficient conditions for the boundedness
of $[b,M_{\alpha}]$ on variable Lebesgue spaces, by which some new
characterizations for certain subclasses of Lipschitz and $BMO(\rn)$
spaces are obtained. Moreover, our results also give affirmative
answers to the questions mentioned in \cite{gdh} and \cite{zhw2}
(see Remark \ref{rem.cor.nc-lip} and Remark \ref{rem.thm.nc-bmo}
below, respectively). We would like to note that some of our results
are new even in the case of Lebesgue spaces with constant exponents.

To state the results, we first recall some definitions and
notations.

Let $\gamma\ge 0$, for a fixed cube $Q_0$,
the fractional maximal function with respect to $Q_0$ of a
locally integrable function $f$ is given by
$$M_{\gamma,Q_0}(f)(x) =\sup_{\substack{Q\ni x\\ Q\subseteq{Q_{0}}}}
\frac{1}{|Q|^{1-\gamma/n}}\int_{Q}|f(y)|dy,
$$
where the supremum is taken over all cubes $Q$ such that
$x \in {Q}\subseteq {Q_0}$.

When $\gamma=0$, we simply write $M_{Q_0}$ instead of $M_{0,Q_0}$.

\begin{definition}   \label{def.lip}
Let $0<\beta<1$, we say a function $b$ belongs to the Lipschitz
space $\dot{\Lambda}_{\beta}(\rn)$, denoted by
$b\in\dot{\Lambda}_{\beta}(\rn)$, if there exists a constant $C>0$
such that for all $x,y\in\rn$,
$$|b(x)-b(y)|\le {C}|x-y|^{\beta}.
$$
The smallest such constant $C$ is called the $\dot{\Lambda}_{\beta}$
norm of $b$ and is denoted by $\|b\|_{\dot{\Lambda}_{\beta}}$.
\end{definition}

\begin{definition}   \label{def.bmo}
A locally integrable function $f$ is said to belong to $BMO(\rn)$
if
$$\|f\|_{BMO}:=\sup_{Q}\frac1{|Q|}\int_Q|f(x)-f_Q|dx<\infty,
$$
where the supremum is taken over all cubes $Q$ in $\rn$.
\end{definition}

For a function $b$ defined on $\rn$, we denote by
\begin{displaymath}
b^{-}(x)= \left\{\begin{array}{ll}
0, &\hbox{if}\ b(x)\ge 0, \\
|b(x)|, &\hbox{if}\ b(x)< 0,
\end{array}\right.
\end{displaymath}
and $b^{+}(x)=|b(x)|-b^{-}(x)$. Obviously, $b^{+}(x)-b^{-}(x)=b(x)$.

\begin{definition} \label{def.var-lp}
Let $p(\cdot): \rn \to [1,\infty)$ be a measurable function. The
variable Lebesgue space, $L^{p(\cdot)}(\rn)$, is defined by
$$ L^{p(\cdot)}(\rn)=\bigg\{f~ \mbox{measurable}:
\int_{\rn} \bigg(\frac{|f(x)|}{\lambda}\bigg)^{p(x)} dx <\infty
~\mbox{for some constant}~ \lambda>0\bigg\}.
$$
\end{definition}
The set $L^{p(\cdot)}(\rn)$ becomes a Banach space with respect to
the norm
\begin{equation*}
\|f\|_{L^{p(\cdot)}(\rn)}=\inf \bigg\{\lambda>0: \int_{\rn}
\bigg(\frac{|f(x)|}{\lambda} \bigg)^{p(x)}dx \le 1 \bigg\}.
\end{equation*}

We refer to \cite{cf}, \cite{dhhr}, \cite{kmrs1} and \cite{kmrs2}
for more details on function spaces with variable exponents.

Denote by $\mathscr{P}(\rn)$ the set of all measurable functions
$p(\cdot): \rn\to[1,\infty)$ such that
$$1< p_{-}:=\essinf_{x\in \rn}p(x) ~~\mathrm{and}~~
{p_{+}:}=\esssup_{x\in \rn}p(x)<\infty,
$$
and by $\mathscr{B}(\rn)$ the set of all $p(\cdot) \in
\mathscr{P}(\rn)$ such that $M$ is bounded on
$L^{p(\cdot)}(\rn)$.

\begin{remark} \label{rem.cfmp-1}
If $p(\cdot)\in \mathscr{B}(\rn)$ and $\lambda>1$, then
${\lambda} p(\cdot)\in \mathscr{B}(\rn)$.
See Remark 2.13 in \cite{cfmp}.
\end{remark}

For notational convenience, we introduce a notation
$\mathscr{B}^{\gamma}(\rn)$ as follows.

\begin{definition}
Let $0<\gamma<n$. We say an ordered pair of variable exponents
$\big(p(\cdot),q(\cdot)\big) \in \mathscr{B}^{\gamma}(\rn)$,
if $p(\cdot)\in \mathscr{P}(\rn)$ with $p_{+}<n/{\gamma}$ and
$1/q(\cdot)=1/p(\cdot)-\gamma/n$ with $q(\cdot)(n-\gamma)/n \in
\mathscr{B}(\rn)$.
\end{definition}

\begin{remark}       \label{rem.cfmp-2}   
The condition $q(\cdot)(n-\gamma)/n \in \mathscr{B}(\rn)$ is
equivalent to saying that there exists $q_0$ with $n/(n-\gamma)<
q_0<\infty$ such that $q(\cdot)/q_0 \in\mathscr{B}(\rn)$. Moreover,
$q(\cdot)(n-\gamma)/n \in \mathscr{B}(\rn)$ implies $q(\cdot) \in
\mathscr{B}(\rn)$. See Remark 2.13 in \cite{cfmp} for details.
\end{remark}

Our results can be stated as follows.

\begin{theorem}  \label{thm.nc-lip}
Let $0<\beta<1$, $0<\alpha<n$, $0<\alpha+\beta<n$ and $b$ be a
locally integrable function. Then the following statements are
equivalent:

(1) $b\in {\dot{\Lambda}_{\beta}(\rn)}$ and $b\ge 0$.

(2) $[b,M_{\alpha}]$ is bounded from $L^{p(\cdot)}(\rn)$ to
$L^{q(\cdot)}(\rn)$ for some
$\big(p(\cdot),q(\cdot)\big)\in\mathscr{B}^{\alpha+\beta}(\rn)$.

(3) $[b,M_{\alpha}]$ is bounded from $L^{p(\cdot)}(\rn)$ to
$L^{q(\cdot)}(\rn)$ for all
$\big(p(\cdot),q(\cdot)\big)\in\mathscr{B}^{\alpha+\beta}(\rn)$.

(4) There exists $s(\cdot)\in \mathscr{B}(\rn)$ such that
\begin{equation}         \label{equ.nc-lip}
\sup_{Q} \frac1{|Q|^{\beta/n}}
\frac{\big\|\big(b-|Q|^{-\alpha/n}M_{\alpha,Q}(b)\big)
\chi_{Q}\big\|_{L^{s(\cdot)}(\rn)}}{\|\chi_{Q}\|_{L^{s(\cdot)}(\rn)}}
<\infty.
\end{equation}

(5) For all $s(\cdot)\in \mathscr{B}(\rn)$ we have
(\ref{equ.nc-lip}).
\end{theorem}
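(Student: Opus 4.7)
The plan is to establish the equivalences via the cyclic scheme $(1)\Rightarrow(3)\Rightarrow(2)\Rightarrow(4)\Rightarrow(1)$, supplemented by $(1)\Rightarrow(5)\Rightarrow(4)$ (the implications $(3)\Rightarrow(2)$ and $(5)\Rightarrow(4)$ being immediate). For $(1)\Rightarrow(3)$, I would use $b\ge 0$ to establish the pointwise bound $|[b,M_\alpha]f(x)|\le M_{\alpha,b}(f)(x)$, then exploit $|b(x)-b(y)|\le \|b\|_{\dot\Lambda_\beta}|x-y|^\beta\le C\|b\|_{\dot\Lambda_\beta}|Q|^{\beta/n}$ for $x,y$ in a common cube $Q$ to deduce $M_{\alpha,b}(f)(x)\le C\|b\|_{\dot\Lambda_\beta}M_{\alpha+\beta}(f)(x)$, and conclude from the known $L^{p(\cdot)}\to L^{q(\cdot)}$ boundedness of $M_{\alpha+\beta}$ for pairs in $\mathscr{B}^{\alpha+\beta}(\rn)$. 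The implication $(1)\Rightarrow(5)$ reduces to the pointwise estimate
$$|b(x)-|Q|^{-\alpha/n}M_{\alpha,Q}(b)(x)|\le C\|b\|_{\dot\Lambda_\beta}|Q|^{\beta/n},\quad x\in Q,$$
which follows from the two-sided control $b_Q\le |Q|^{-\alpha/n}M_{\alpha,Q}(b)(x)\le b(x)+C\|b\|_{\dot\Lambda_\beta}|Q|^{\beta/n}$ (the lower bound by choosing $Q'=Q$ in the supremum, the upper bound from the Lipschitz hypothesis applied to each subcube $Q'\subseteq Q$) together with $|b_Q-b(x)|\le C\|b\|_{\dot\Lambda_\beta}|Q|^{\beta/n}$.

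For $(2)\Rightarrow(4)$ the key is the test-function identity: since $M_\alpha(\chi_Q)(x)=|Q|^{\alpha/n}$ and, for $x\in Q$, $M_\alpha(|b|\chi_Q)(x)$ coincides with $M_{\alpha,Q}(|b|)(x)$ up to dimensional constants, one obtains
$$[b,M_\alpha](\chi_Q)(x)=|Q|^{\alpha/n}\bigl(b(x)-|Q|^{-\alpha/n}M_{\alpha,Q}(|b|)(x)\bigr)\quad\textrm{on } Q.$$
Taking the $L^{q(\cdot)}$-norm over $Q$, applying (2) to $f=\chi_Q$, and invoking the standard scaling $\|\chi_Q\|_{L^{p(\cdot)}(\rn)}\le C|Q|^{(\alpha+\beta)/n}\|\chi_Q\|_{L^{q(\cdot)}(\rn)}$ available for $(p(\cdot),q(\cdot))\in\mathscr{B}^{\alpha+\beta}(\rn)$ yields (\ref{equ.nc-lip}) with $s(\cdot)=q(\cdot)$, and $q(\cdot)\in\mathscr{B}(\rn)$ by Remark \ref{rem.cfmp-2}.

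The technical crux is $(4)\Rightarrow(1)$, which I split in two steps. To force $b\ge 0$ a.e.\ I argue by contradiction: suppose $E=\{b<0\}$ has positive measure and pick a Lebesgue density point $x_0\in E$. For small cubes $Q\ni x_0$ with $|E\cap Q|/|Q|\to 1$, the chain $b(x)<0\le |b|_Q\le |Q|^{-\alpha/n}M_{\alpha,Q}(|b|)(x)$ on $E\cap Q$ gives
$$\|(b-|Q|^{-\alpha/n}M_{\alpha,Q}(|b|))\chi_Q\|_{L^{s(\cdot)}(\rn)}\ge |b|_Q\,\|\chi_{E\cap Q}\|_{L^{s(\cdot)}(\rn)},$$
which, combined with (\ref{equ.nc-lip}) and the comparability $\|\chi_{E\cap Q}\|_{L^{s(\cdot)}(\rn)}\approx \|\chi_Q\|_{L^{s(\cdot)}(\rn)}$ valid for $s(\cdot)\in\mathscr{B}(\rn)$ when $|E\cap Q|/|Q|$ is close to $1$, yields $|b|_Q\le C|Q|^{\beta/n}\to 0$, contradicting $|b|_Q\to |b(x_0)|>0$. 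Once $b\ge 0$ is known, the Lipschitz property is deduced from the standard $L^{s(\cdot)}$-characterization $\|b\|_{\dot\Lambda_\beta}\approx \sup_Q|Q|^{-\beta/n}\|(b-b_Q)\chi_Q\|_{L^{s(\cdot)}}/\|\chi_Q\|_{L^{s(\cdot)}}$, using that $|Q|^{-\alpha/n}M_{\alpha,Q}(b)(x)\ge b_Q$ forces $|b-b_Q|\le |b-|Q|^{-\alpha/n}M_{\alpha,Q}(b)|$ on $\{b<b_Q\}\cap Q$ and recovering the complementary part via $\int_Q(b-b_Q)^+\,dx=\int_Q(b-b_Q)^-\,dx$ together with the boundedness of $M$ on $L^{s(\cdot)}(\rn)$. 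The main obstacle is precisely this last step: upgrading the one-sided, signed control of (\ref{equ.nc-lip}) into a full oscillation estimate for $b-b_Q$.
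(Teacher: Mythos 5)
Your proposal is correct in its overall outline and coincides with the paper on $(1)\Rightarrow(3)$ and $(2)\Rightarrow(4)$ (pointwise domination by $M_{\alpha+\beta}$; testing on $\chi_Q$ with the scaling $\|\chi_Q\|_{L^{p(\cdot)}}\lesssim|Q|^{(\alpha+\beta)/n}\|\chi_Q\|_{L^{q(\cdot)}}$). The two places where you diverge are genuine and worth noting. For statement (5), you prove $(1)\Rightarrow(5)$ directly from the pointwise bound $|b(x)-|Q|^{-\alpha/n}M_{\alpha,Q}(b)(x)|\le C\|b\|_{\dot{\Lambda}_{\beta}}|Q|^{\beta/n}$ on $Q$, which is correct and indeed gives (\ref{equ.nc-lip}) for \emph{every} exponent, not just those in $\mathscr{B}(\rn)$. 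The paper instead proves $(3)\Rightarrow(5)$, reducing an arbitrary $s(\cdot)\in\mathscr{B}(\rn)$ to an exponent of the form $q(\cdot)=rs(\cdot)$ coming from a pair in $\mathscr{B}^{\alpha+\beta}(\rn)$ via Lemma \ref{lem.holder}(ii) and Lemma \ref{lem.cu-w}; your route is the more elementary one.

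For $(4)\Rightarrow(1)$ your strategy is structurally different from the paper's. You first force $b\ge 0$ by a Lebesgue density-point contradiction, and this argument is sound: from $b(x)<0\le |b|_Q\le |Q|^{-\alpha/n}M_{\alpha,Q}(b)(x)$ on $E\cap Q$, the hypothesis gives $|b|_Q\,\|\chi_{E\cap Q}\|_{L^{s(\cdot)}}\le C|Q|^{\beta/n}\|\chi_Q\|_{L^{s(\cdot)}}$, and the comparability $\|\chi_{E\cap Q}\|_{L^{s(\cdot)}}\gtrsim\|\chi_Q\|_{L^{s(\cdot)}}$ indeed holds once $|E\cap Q|\ge\tfrac12|Q|$ because $M\chi_{E\cap Q}\ge\tfrac12\chi_Q$ and $M$ is bounded on $L^{s(\cdot)}$. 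The paper, by contrast, never isolates $b\ge 0$: it bounds $\int_Q|b-M_Q(b)|\,dx$ and then invokes Lemma \ref{lem.lip+}, which yields $b\ge 0$ and $b\in\dot{\Lambda}_{\beta}$ in one stroke; the extra term $\int_Q\big||Q|^{-\alpha/n}M_{\alpha,Q}(b)-M_Q(b)\big|$ is handled by first extracting $b\in\dot{\Lambda}_{\beta}$ from (\ref{equ.nc-lip}) (Lemma \ref{lem.nc-lip}), then applying Lemma \ref{lem.nc-m} to $[|b|,M_{\alpha}](\chi_Q)$ and $[|b|,M](\chi_Q)$ using $|b|\ge 0$, $|b|\in\dot{\Lambda}_{\beta}$. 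What you gain is a cleaner logical separation; what you lose is a bit of machinery reuse.

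However, the ``main obstacle'' you flag in the final step is not actually an obstacle. You do not need an $L^{s(\cdot)}$-oscillation bound; the $L^1$ one suffices, because $\dot{\Lambda}_{\beta}=\dot{\Lambda}_{\beta,1}$ by Lemma \ref{lem.lip}. With $E=\{x\in Q:b(x)\le b_Q\}$ and $F=Q\setminus E$, the identity $\int_E|b-b_Q|=\int_F|b-b_Q|$ gives
$$\int_Q|b-b_Q|\,dx=2\int_E|b-b_Q|\,dx\le 2\int_Q\big|b-|Q|^{-\alpha/n}M_{\alpha,Q}(b)\big|\,dx,$$
since $b\le b_Q\le |Q|^{-\alpha/n}M_{\alpha,Q}(b)$ on $E$. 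Then H\"older's inequality (Lemma \ref{lem.holder}(i)), the hypothesis (\ref{equ.nc-lip}), and Lemma \ref{lem.izuki} bound the right side by $C|Q|^{1+\beta/n}$, so $b\in\dot{\Lambda}_{\beta,1}=\dot{\Lambda}_{\beta}$. In short: you were attempting an unnecessary upgrade to $L^{s(\cdot)}$; dropping to $L^1$, exactly as in the paper's Lemma \ref{lem.nc-lip}, closes the gap and completes your proof.
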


\begin{remark}       \label{rem.thm.nc-lip}   
For the case $\alpha=0$, the result was proved in \cite{zhp2}.
Moreover, (\ref{equ.nc-lip}) gives a new characterization of
nonnegative Lipschitz functions, compaired with \cite[Theorem
1.5]{zhp2}.
\end{remark}

For the case $p(\cdot)$ and $q(\cdot)$ being constants, we have the
following results from Theorem \ref{thm.nc-lip}, which is new even
for this case.

\begin{corollary}  \label{cor.nc-lip}
Let $0<\beta<1$, $0<\alpha<n$, $0<\alpha+\beta<n$ and $b$ be a
locally integrable function. Then the following statements are
equivalent:

(1) $b\in {\dot{\Lambda}_{\beta}(\rn)}$ and $b\ge 0$.

(2) $[b,M_{\alpha}]$ is bounded from $L^p(\rn)$ to $L^q(\rn)$
for some $p$ and $q$ such that $1<p<n/(\alpha+\beta)$ and
$1/q=1/p-(\alpha+\beta)/n$.

(3) $[b,M_{\alpha}]$ is bounded from $L^p(\rn)$ to $L^q(\rn)$
for all $p$ and $q$ such that $1<p<n/(\alpha+\beta)$ and
$1/q=1/p-(\alpha+\beta)/n$.

(4) There exists $s\in [1,\infty)$ such that
\begin{equation}         \label{equ.cor.nc-lip}
\sup_{Q} \frac1{|Q|^{\beta/n}} \bigg(\frac1{|Q|} \int_Q
\big|b(x)-|Q|^{-\alpha/n}M_{\alpha,Q}(b)(x)\big|^s dx \bigg)^{1/s}
<\infty.
\end{equation}

(5) For all $s\in [1,\infty)$ we have (\ref{equ.cor.nc-lip}).
\end{corollary}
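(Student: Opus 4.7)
The plan is to derive Corollary~\ref{cor.nc-lip} from Theorem~\ref{thm.nc-lip} by specializing to constant exponents, then patching the endpoint $s=1$ by hand. First I would check that any constant pair $(p,q)$ in the stated range lies in $\mathscr{B}^{\alpha+\beta}(\rn)$: constants in $(1,\infty)$ are automatically in $\mathscr{P}(\rn)\cap\mathscr{B}(\rn)$ by the classical Hardy--Littlewood theorem, and a short computation shows that the condition $1<p<n/(\alpha+\beta)$ is equivalent to $q(n-\alpha-\beta)/n>1$, placing this constant exponent in $\mathscr{B}(\rn)$ as well. Since $\|\chi_Q\|_{L^s}=|Q|^{1/s}$ for constant $s$, the theorem's condition (\ref{equ.nc-lip}) reduces verbatim to (\ref{equ.cor.nc-lip}). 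This immediately yields $(1)\Leftrightarrow(2)\Leftrightarrow(3)$ and the equivalence of $(1)$ with $(4)$, $(5)$ restricted to constants $s>1$.

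Next I would extend $(4)$, $(5)$ to include the endpoint $s=1$. For $(1)\Rightarrow(5)$, I would establish the pointwise estimate
$$\bigl|b(x)-|Q|^{-\alpha/n}M_{\alpha,Q}(b)(x)\bigr|\le C\|b\|_{\dot{\Lambda}_{\beta}}|Q|^{\beta/n},\qquad x\in Q,$$
for non-negative Lipschitz $b$: the upper half comes from $|Q|^{-\alpha/n}M_{\alpha,Q}(b)(x)\le M_Q b(x)\le b(x)+C\|b\|_{\dot{\Lambda}_{\beta}}|Q|^{\beta/n}$ (standard Lipschitz control of averages), and the lower half from $|Q|^{-\alpha/n}M_{\alpha,Q}(b)(x)\ge b_Q\ge b(x)-C\|b\|_{\dot{\Lambda}_{\beta}}|Q|^{\beta/n}$ via the choice $R=Q$ in the defining supremum. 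This $L^\infty$ bound forces $(5)$ for every $s\in[1,\infty)$, and $(5)\Rightarrow(4)$ is trivial.

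The main obstacle is $(4)\Rightarrow(1)$ when the available $s$ is exactly $1$, because the class $\mathscr{B}(\rn)$ in Theorem~\ref{thm.nc-lip} excludes $s=1$. I would proceed in two steps. First, $b\ge 0$ a.e.\ can be forced by contradiction: if $b(x_0)<-\varepsilon$ at a Lebesgue point, then on small enough cubes $Q\ni x_0$ one has $|Q|^{-\alpha/n}M_{\alpha,Q}(b)(x)\ge|b_Q|>\varepsilon/2$ for every $x\in Q$ (by Lebesgue differentiation) while $b<-3\varepsilon/4$ on at least half of $Q$, making the left-hand side of (\ref{equ.cor.nc-lip}) blow up as $|Q|^{\beta/n}\to 0$. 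Second, once $b\ge 0$, the inequality $|Q|^{-\alpha/n}M_{\alpha,Q}(b)(x)\ge b_Q$ together with $\int_Q(b-b_Q)=0$ gives
$$\int_Q\bigl(|Q|^{-\alpha/n}M_{\alpha,Q}(b)-b_Q\bigr)=\Bigl|\int_Q\bigl(b-|Q|^{-\alpha/n}M_{\alpha,Q}(b)\bigr)\Bigr|\le\int_Q\bigl|b-|Q|^{-\alpha/n}M_{\alpha,Q}(b)\bigr|,$$
whence the triangle inequality and $(4)$ with $s=1$ yield $|Q|^{-1-\beta/n}\int_Q|b-b_Q|\le C$, the classical Campanato--Meyers characterization of $\dot{\Lambda}_{\beta}(\rn)$.
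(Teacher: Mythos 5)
Your derivation is correct, and you rightly flag the one real gap in treating this Corollary as a direct specialization of Theorem~\ref{thm.nc-lip}: the paper's conditions (4)--(5) use $s(\cdot)\in\mathscr{B}(\rn)$, which for constants forces $s>1$, whereas the Corollary asserts $s\in[1,\infty)$, so the endpoint $s=1$ genuinely needs a separate argument. Your verification that constant admissible pairs $(p,q)$ belong to $\mathscr{B}^{\alpha+\beta}(\rn)$ is right, and the reduction of (\ref{equ.nc-lip}) to (\ref{equ.cor.nc-lip}) via $\|\chi_Q\|_{L^s}=|Q|^{1/s}$ is immediate.

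Where you differ from the route the paper's machinery suggests is in how you patch $s=1$. For $(1)\Rightarrow(5)$ you prove the pointwise bound $\bigl|b(x)-|Q|^{-\alpha/n}M_{\alpha,Q}(b)(x)\bigr|\le C\|b\|_{\dot{\Lambda}_{\beta}}|Q|^{\beta/n}$ on $Q$ (correct: the upper bound uses $|Q|^{-\alpha/n}M_{\alpha,Q}(b)\le M_Q(b)$ since $|R|\le|Q|$, and the lower bound uses $|Q|^{-\alpha/n}M_{\alpha,Q}(b)\ge b_Q$ at $R=Q$, plus the Lipschitz control of averages), which is strictly stronger than what is needed and bypasses Lemma~\ref{lem.nc-m}. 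For $(4)$ at $s=1\Rightarrow(1)$ you split nonnegativity and Lipschitz regularity into two self-contained steps: a Lebesgue-point contradiction for $b\ge 0$, and the triangle inequality plus the pointwise bound $|Q|^{-\alpha/n}M_{\alpha,Q}(b)\ge|b_Q|$ to get $|Q|^{-1-\beta/n}\int_Q|b-b_Q|\lesssim 1$. The paper's internal proof instead establishes $\sup_Q|Q|^{-1-\beta/n}\int_Q|b-M_Q(b)|<\infty$ (via the decomposition into $I_1,I_2$ in (\ref{equ.nc-lip-2}), Lemma~\ref{lem.nc-lip}, and Lemma~\ref{lem.nc-m}), then invokes Lemma~\ref{lem.lip+} -- which is already stated for all $s\in[1,\infty)$ -- to deliver both $b\in\dot{\Lambda}_{\beta}$ and $b\ge0$ in a single stroke; inspecting those lemmas shows their H\"older steps degenerate to trivial $L^1$ bounds at $s=1$, so the paper's route also closes the endpoint. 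Your argument is more elementary and avoids the maximal-commutator pointwise estimate entirely, at the cost of proving nonnegativity by hand; the paper's route is shorter given that Lemmas~\ref{lem.lip},~\ref{lem.lip+},~\ref{lem.nc-lip},~\ref{lem.nc-m} are already in place. Both are valid.
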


\begin{remark} \label{rem.cor.nc-lip} 
The result was proved for $\alpha=0$ in \cite[Theorem 1.4]{zhp1}.
Corollary \ref{cor.nc-lip} improves the result of
\cite[Corollary 4.15]{gdh} essentially and answers a question
asked in \cite[Remark 4.17]{gdh} affirmatively.
Moreover, it was proved in \cite[Theorem 1.4]{zhp1}, see also
Lemma \ref{lem.lip+} below, that
$b\in{\dot{\Lambda}_{\beta}(\rn)}$ and $b\ge 0$ if and only if
\begin{equation}         \label{equ.rem.nc-lip}
\sup_{Q} \frac1{|Q|^{\beta/n}} \bigg(\frac1{|Q|} \int_Q
\big|b(x)-M_{Q}(b)(x)\big|^s dx \bigg)^{1/s} <\infty.
\end{equation}
Compared with (\ref{equ.rem.nc-lip}), (\ref{equ.cor.nc-lip}) gives a new
characterization for nonnegative Lipschitz functions.
\end{remark}

\begin{theorem}  \label{thm.nc-bmo}
Let $0<\alpha<n$ and $b$ be a locally integrable function. Then the
following statements are equivalent:

(1) $b\in {BMO(\rn)}$ and $b^{-}\in{L^{\infty}(\rn)}$.

(2) $[b,M_{\alpha}]$ is bounded from $L^{p(\cdot)}(\rn)$ to
$L^{q(\cdot)}(\rn)$
for some $\big(p(\cdot),q(\cdot)\big)\in\mathscr{B}^{\alpha}(\rn)$.

(3) $[b,M_{\alpha}]$ is bounded from $L^{p(\cdot)}(\rn)$ to
$L^{q(\cdot)}(\rn)$
for all $\big(p(\cdot),q(\cdot)\big)\in\mathscr{B}^{\alpha}(\rn)$.

(4) There exists $s(\cdot)\in \mathscr{B}(\rn)$ such that
\begin{equation}         \label{equ.nc-bmo}
\sup_{Q} \frac{\big\|\big(b-|Q|^{-\alpha/n}M_{\alpha,Q}(b)\big)
\chi_{Q}\big\|_{L^{s(\cdot)}(\rn)}}{\|\chi_{Q}\|_{L^{s(\cdot)}(\rn)}}
<\infty.
\end{equation}

(5) For all $s(\cdot)\in \mathscr{B}(\rn)$ we have (\ref{equ.nc-bmo}).
\end{theorem}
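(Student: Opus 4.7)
The plan is to establish the equivalences via the cycle $(1)\Rightarrow(3)\Rightarrow(2)\Rightarrow(4)\Rightarrow(1)$, the direct implication $(1)\Rightarrow(5)$, and the trivial $(5)\Rightarrow(4)$, closing all five conditions; $(3)\Rightarrow(2)$ is immediate.

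For $(1)\Rightarrow(3)$, I will use the pointwise bound
\[
|[b,M_\alpha](f)(x)| \le M_{\alpha,b^+}(f)(x) + 2\|b^-\|_\infty M_\alpha(f)(x),
\]
which follows from the sublinearity $M_\alpha(|b||f|)\le M_\alpha(b^+|f|)+\|b^-\|_\infty M_\alpha(f)$ and the standard estimate $|[b^+,M_\alpha](f)|\le M_{\alpha,b^+}(f)$ for nonnegative $b^+$. Since $b\in BMO(\rn)$ and $b^-\in L^\infty(\rn)$ give $b^+=b+b^-\in BMO(\rn)$, the known boundedness of $M_{\alpha,b^+}$ (for $b^+\in BMO$) and of $M_\alpha$ from $L^{p(\cdot)}(\rn)$ to $L^{q(\cdot)}(\rn)$ on pairs in $\mathscr{B}^\alpha(\rn)$ closes this step.

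For $(2)\Rightarrow(4)$, I test on $\chi_Q$: $M_\alpha(\chi_Q)(x)=|Q|^{\alpha/n}$ on $Q$, and a brief geometric argument---for $Q'\ni x$ with $|Q'|\le|Q|$, choose $\widetilde{Q}\subseteq Q$ with $x\in\widetilde{Q}$, $|\widetilde{Q}|=|Q'|$, and $Q'\cap Q\subseteq\widetilde{Q}$; for $|Q'|>|Q|$, compare directly with $Q$---shows $M_\alpha(b\chi_Q)(x)=M_{\alpha,Q}(b)(x)$ on $Q$. Hence
\[
|[b,M_\alpha](\chi_Q)(x)|\chi_Q(x)=|Q|^{\alpha/n}\bigl|b(x)-|Q|^{-\alpha/n}M_{\alpha,Q}(b)(x)\bigr|\chi_Q(x).
\]
Taking $L^{q(\cdot)}$-norms, applying the boundedness hypothesis, and using $|Q|^{\alpha/n}\|\chi_Q\|_{L^{q(\cdot)}}\lesssim\|\chi_Q\|_{L^{p(\cdot)}}$ (from the boundedness of $M_\alpha$ applied to $\chi_Q$) yields (4) with $s(\cdot)=q(\cdot)\in\mathscr{B}(\rn)$ by Remark \ref{rem.cfmp-2}.

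For $(4)\Rightarrow(1)$, the generalized Hölder inequality in $L^{s(\cdot)}$ converts (4) into $\frac{1}{|Q|}\int_Q|b(y)-A_Q(y)|\,dy\le C$ uniformly in $Q$, where $A_Q(y):=|Q|^{-\alpha/n}M_{\alpha,Q}(b)(y)\ge(|b|)_Q\ge b_Q$ (take $Q$ itself in the defining supremum). Thus $2(b^-)_Q=(|b|)_Q-b_Q\le\frac{1}{|Q|}\int_Q(A_Q-b)\,dy\le C$ for every $Q$, giving $b^-\in L^\infty(\rn)$ by Lebesgue differentiation; and $\frac{1}{|Q|}\int_Q|b-b_Q|\le\frac{1}{|Q|}\int_Q|b-A_Q|+\frac{1}{|Q|}\int_Q(A_Q-b_Q)\le 2C$, so $b\in BMO(\rn)$. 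Finally, $(1)\Rightarrow(5)$ combines the variable-exponent John--Nirenberg inequality $\|(b-b_Q)\chi_Q\|_{L^{s(\cdot)}}\lesssim\|b\|_{BMO}\|\chi_Q\|_{L^{s(\cdot)}}$ (valid for $s(\cdot)\in\mathscr{B}(\rn)$) with the boundedness of $M$ on $L^{s(\cdot)}$ applied to $|b|$ and the estimate $A_Q(y)-b_Q\le M_Q(|b|)(y)-(|b|)_Q+2\|b^-\|_\infty$. The main technical obstacles are the geometric identification $M_\alpha(b\chi_Q)=M_{\alpha,Q}(b)$ on $Q$ and careful handling of the Hölder/norm equivalences and the variable-exponent John--Nirenberg inequality for $s(\cdot)\in\mathscr{B}(\rn)$.
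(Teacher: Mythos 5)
Your logical cycle $(1)\Rightarrow(3)\Rightarrow(2)\Rightarrow(4)\Rightarrow(1)$ together with $(1)\Rightarrow(5)\Rightarrow(4)$ is valid, and the overall proof is essentially correct, but in two places you take a genuinely different route from the paper. The most interesting difference is in $(4)\Rightarrow(1)$: the paper first invokes an auxiliary characterization (its Lemma~\ref{lem.zhw2-2}) to conclude $b\in BMO$, then bounds $|\,|Q|^{-\alpha/n}M_{\alpha,Q}(b)-M_Q(b)\,|$ by commutators $[|b|,M_\alpha](\chi_Q)$ and $[|b|,M](\chi_Q)$, uses the known variable-exponent boundedness of those commutators for nonnegative $BMO$ symbols (Lemma~\ref{lem.zhw2-1}), and finally appeals to Lemma~\ref{lem.bmr}. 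Your argument is substantially more elementary and self-contained: after using H\"older plus Lemma~\ref{lem.izuki} to pass from the $L^{s(\cdot)}$-estimate to the averaged estimate $\frac{1}{|Q|}\int_Q|b-A_Q|\le C$, you exploit only the trivial lower bound $A_Q(y)\ge(|b|)_Q\ge b_Q$ (taking $Q'=Q$ in the supremum) to extract $2(b^-)_Q\le C$ (hence $b^-\in L^\infty$ via Lebesgue differentiation) and $\frac{1}{|Q|}\int_Q|b-b_Q|\le 2C$ (hence $b\in BMO$) directly, with no commutator boundedness theorem and no Lemma~\ref{lem.bmr}. That is a real simplification. Your treatment of $(5)$ also differs: you prove $(1)\Rightarrow(5)$ via the variable-exponent John--Nirenberg inequality and a pointwise bound $A_Q-b_Q\le M_Q(|b|)-(|b|)_Q+2\|b^-\|_\infty$, whereas the paper proves $(3)\Rightarrow(5)$ by a short H\"older/renormalization trick with $q(\cdot)=rs(\cdot)$ (its Remark~\ref{(3)to(5)}); both work, the paper's being slicker, yours more explicit.

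Two small corrections you should make. First, in $(2)\Rightarrow(4)$ you invoke the inequality $|Q|^{\alpha/n}\|\chi_Q\|_{L^{q(\cdot)}}\lesssim\|\chi_Q\|_{L^{p(\cdot)}}$, but the implication actually needs the \emph{reverse} inequality $\|\chi_Q\|_{L^{p(\cdot)}}\lesssim|Q|^{\alpha/n}\|\chi_Q\|_{L^{q(\cdot)}}$ to turn $|Q|^{-\alpha/n}\|\chi_Q\|_{L^{p(\cdot)}}$ into $C\|\chi_Q\|_{L^{q(\cdot)}}$; the one you wrote down follows from $M_\alpha$ boundedness but does not help here, while the one you need is exactly Lemma~\ref{lem.zhp2}. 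Second, in $(1)\Rightarrow(3)$ the pointwise bound $|[b,M_\alpha](f)|\le M_{\alpha,b^+}(f)+2\|b^-\|_\infty M_\alpha(f)$ is correct, but you should say a word about why the quantities are finite a.e.\ (the analogue of Lemma~\ref{lem.fracmax-ae}) so that the algebraic splitting into $[b^+,M_\alpha](f)-b^-M_\alpha(f)+(M_\alpha(b^+f)-M_\alpha(|b|f))$ is legitimate pointwise. Neither issue affects the substance of the argument.
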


\begin{remark}   \label{rem.thm.nc-bmo}
The equivalence of (1), (2) and (3) was proved in \cite{zhw2}.
Statements (4) and (5) give new necessary and sufficient condition for the
statements (1), (2) and (3). Especially, (\ref{equ.nc-bmo}) gives a
new characterization for $b\in {BMO(\rn)}$ and
$b^{-}\in{L^{\infty}(\rn)}$, which also answers a question asked in
\cite[Remark 4.1]{zhw2}. For the case $\alpha=0$, the result was
obtained in \cite{zhw3}.
\end{remark}

For the case $p(\cdot)$ and $q(\cdot)$ being constants, we have the
following results by Theorem \ref{thm.nc-bmo}.

\begin{corollary}  \label{cor.nc-bmo}
Let $0<\alpha<n$ and $b$ be a locally integrable function.
Then the following statements are equivalent:

(1) $b\in BMO(\rn)$ and $b^{-}\in L^{\infty}(\rn)$.

(2) $[b,M_{\alpha}]$ is bounded from $L^p(\rn)$ to $L^q(\rn)$ for
some $p$ and $q$ such that $1<p<n/\alpha$ and $1/q=1/p-\alpha/n$.

(3) $[b,M_{\alpha}]$ is bounded from $L^p(\rn)$ to $L^q(\rn)$ for
all $p$ and $q$ such that $1<p<n/\alpha$ and $1/q=1/p-\alpha/n$.

(4) There exists $s\in [1,\infty)$ such that
\begin{equation}         \label{equ.cor.nc-bmo}
\sup_{Q} \bigg(\frac1{|Q|} \int_Q
|b(x)-|Q|^{-\alpha/n}M_{\alpha,Q}(b)(x)|^s dx \bigg)^{1/s} <\infty.
\end{equation}

(5) For all $s\in [1,\infty)$ we have (\ref{equ.cor.nc-bmo}).
\end{corollary}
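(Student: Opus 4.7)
The plan is to deduce Corollary \ref{cor.nc-bmo} directly from Theorem \ref{thm.nc-bmo} by specializing the variable exponents to constants, and then to handle the borderline exponent $s=1$ by a short direct argument. Three ingredients are needed: verifying that constant pairs $(p,q)$ belong to $\mathscr{B}^{\alpha}(\rn)$; computing the variable Lebesgue norms in (\ref{equ.nc-bmo}) for constant $s>1$; and treating the case $s=1$ separately, since $\mathscr{B}(\rn)$ excludes it.

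For the first ingredient, any constants with $1<p<n/\alpha$ and $1/q=1/p-\alpha/n$ satisfy $p\in\mathscr{P}(\rn)$, $p_{+}=p<n/\alpha$, and $q(n-\alpha)/n=p(n-\alpha)/(n-p\alpha)>1$, so $M$ is bounded on $L^{q(n-\alpha)/n}(\rn)$; hence $(p,q)\in\mathscr{B}^{\alpha}(\rn)$ and Theorem \ref{thm.nc-bmo} yields $(1)\Leftrightarrow(2)\Leftrightarrow(3)$. For the second, any constant $s>1$ lies in $\mathscr{B}(\rn)$, and the elementary identities
\begin{equation*}
\|\chi_{Q}\|_{L^{s}(\rn)}=|Q|^{1/s},\qquad
\big\|\big(b-|Q|^{-\alpha/n}M_{\alpha,Q}(b)\big)\chi_{Q}\big\|_{L^{s}(\rn)}=\bigg(\int_{Q}\big|b(x)-|Q|^{-\alpha/n}M_{\alpha,Q}(b)(x)\big|^{s}dx\bigg)^{1/s}
\end{equation*}
reduce (\ref{equ.nc-bmo}) to (\ref{equ.cor.nc-bmo}), giving the equivalence of (1) with (4) and (5) for constant $s>1$.

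The step I expect to be most subtle is $(4)\Rightarrow(1)$ at $s=1$, which is not covered by the Theorem. Here I would argue directly by writing $\Phi_{Q}(x):=|Q|^{-\alpha/n}M_{\alpha,Q}(b)(x)$ and using the elementary inequality $\Phi_{Q}(x)\ge\frac{1}{|Q|}\int_{Q}|b|\ge|b_{Q}|$ valid for every $x\in Q$. This forces $\frac{1}{|Q|}\int_{Q}(\Phi_{Q}-b)\,dx\le\frac{1}{|Q|}\int_{Q}|b-\Phi_{Q}|\,dx$ and hence
\begin{equation*}
\frac{1}{|Q|}\int_{Q}|b-b_{Q}|\,dx\le\frac{1}{|Q|}\int_{Q}|b-\Phi_{Q}|\,dx+\frac{1}{|Q|}\int_{Q}(\Phi_{Q}-b_{Q})\,dx\le 2\cdot\frac{1}{|Q|}\int_{Q}|b-\Phi_{Q}|\,dx,
\end{equation*}
so (\ref{equ.cor.nc-bmo}) with $s=1$ forces $b\in BMO(\rn)$. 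For $b^{-}\in L^{\infty}(\rn)$, the identity $|b|-b=2b^{-}$ combined with $\frac{1}{|Q|}\int_{Q}\Phi_{Q}\,dx\ge|b|_{Q}$ gives $\frac{1}{|Q|}\int_{Q}b^{-}\,dx\le\frac{1}{2}\cdot\frac{1}{|Q|}\int_{Q}|b-\Phi_{Q}|\,dx$, and Lebesgue differentiation at almost every point of $\{b<0\}$ produces the desired essential bound. Finally, $(1)\Rightarrow(5)$ at $s=1$ follows from $(1)\Rightarrow(5)$ at $s=2$ via H\"older's inequality, while $(5)\Rightarrow(4)$ is trivial, completing the chain.
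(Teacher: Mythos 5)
Your proof is correct, and it fills in a gap the paper leaves implicit. The paper derives Corollary \ref{cor.nc-bmo} from Theorem \ref{thm.nc-bmo} without comment, but as you observe, the theorem's conditions (4) and (5) range over $s(\cdot)\in\mathscr{B}(\rn)$, which forces $s_{-}>1$, whereas the corollary's conditions (4) and (5) explicitly allow $s=1$; in particular, a direct specialization does not yield the implication $(4)\Rightarrow(1)$ when the witnessing exponent in (4) equals $1$. Your direct handling of this borderline case is therefore necessary, not merely cosmetic. The estimate you use, namely that $\Phi_{Q}(x):=|Q|^{-\alpha/n}M_{\alpha,Q}(b)(x)\ge|b|_{Q}\ge|b_{Q}|$ for every $x\in Q$, followed by the triangle inequality together with the observation $\int_{Q}(\Phi_{Q}-b_{Q})\,dx=\int_{Q}(\Phi_{Q}-b)\,dx\le\int_{Q}|b-\Phi_{Q}|\,dx$, correctly yields $\frac1{|Q|}\int_{Q}|b-b_{Q}|\,dx\le\frac{2}{|Q|}\int_{Q}|b-\Phi_{Q}|\,dx$; this is a different route from the paper's own reduction in Lemma \ref{lem.nc-lip}, which splits $Q$ into $E=\{b\le b_{Q}\}$ and $F=\{b>b_{Q}\}$ and exploits the identity $\int_{E}|b-b_{Q}|=\int_{F}|b-b_{Q}|$, and it is also simpler and more elementary than the machinery the paper deploys for $(4)\Rightarrow(1)$ in Theorem \ref{thm.nc-bmo} (the $J_{1},J_{2},J_{3}$ decomposition together with the boundedness results of Lemma \ref{lem.zhw2-1}). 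Your derivation of the $b^{-}$ bound from $2b^{-}=|b|-b$, the inequality $\int_{Q}\Phi_{Q}\ge\int_{Q}|b|$, and Lebesgue differentiation is likewise sound (minor quibble: the differentiation argument applies a.e.\ on all of $\rn$, not just on $\{b<0\}$, though the restriction is harmless since $b^{-}=0$ elsewhere). The remaining verifications — that constant pairs with $1<p<n/\alpha$ and $1/q=1/p-\alpha/n$ satisfy $q(n-\alpha)/n=p(n-\alpha)/(n-p\alpha)>1$ so that $(p,q)\in\mathscr{B}^{\alpha}(\rn)$, that the Luxemburg norm reduces to the usual $L^{s}$ norm for constant $s$, and that H\"older passes $(1)\Rightarrow(5)$ down from $s=2$ to $s=1$ — are all correct.
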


\begin{remark}        \label{rem.cor.nc-bmo}  
It was shown in \cite{bmr} and \cite{zhw1} that statements (1), (2)
and (3) are equivalent to
\begin{equation}         \label{equ.rem.cor.nc-bmo}
\sup_{Q} \frac1{|Q|} \int_Q|b(x)-M_{Q}(b)(x)|^sdx<\infty,
\end{equation}
respectively. Compared with (\ref{equ.rem.cor.nc-bmo}),
(\ref{equ.cor.nc-bmo}) gives a new characterization.
\end{remark}

Next, we give some necessary and sufficient conditions for the
boundedness of the maximal commutator $M_{\alpha,b}$ on variable
Lebegue spaces when $b$ belongs to Lipschitz space.

\begin{theorem}        \label{thm.mc-lip} 
Let $0<\beta<1$, $0<\alpha<n$, $0<\alpha+\beta<n$ and $b$ be a
locally integrable function. Then the following statements are
equivalent:

(1) $b\in {\dot{\Lambda}_{\beta}(\rn)}$.

(2) $M_{\alpha,b}$ is bounded from $L^{p(\cdot)}(\rn)$ to
$L^{q(\cdot)}(\rn)$ for some
$\big(p(\cdot),q(\cdot)\big)\in\mathscr{B}^{\alpha+\beta}(\rn)$.

(3) $M_{\alpha,b}$ is bounded from $L^{p(\cdot)}(\rn)$ to
$L^{q(\cdot)}(\rn)$ for all
$\big(p(\cdot),q(\cdot)\big)\in\mathscr{B}^{\alpha+\beta}(\rn)$.

(4) There exists $s(\cdot)\in \mathscr{B}(\rn)$ such that
\begin{equation}             \label{equ.mc-lip}
\sup_{Q} \frac1{|Q|^{\beta/n}} \frac{\|(b-b_{Q})
\chi_{Q}\|_{L^{s(\cdot)}(\rn)}} {\|\chi_{Q}\|_{L^{s(\cdot)}(\rn)}}
<\infty.
\end{equation}

(5) For all $s(\cdot)\in \mathscr{B}(\rn)$ we have (\ref{equ.mc-lip}).
\end{theorem}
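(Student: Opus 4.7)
The plan is to close all five equivalences through two interlocking cycles: $(1)\Rightarrow(5)\Rightarrow(4)\Rightarrow(1)$, together with $(1)\Rightarrow(3)\Rightarrow(2)\Rightarrow(4)$. The implications $(5)\Rightarrow(4)$ and $(3)\Rightarrow(2)$ are trivial, so only four steps carry content.

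For $(1)\Rightarrow(5)$, I would use the Lipschitz condition to note that for every $x\in Q$ one has $|b(x)-b_{Q}|\le\|b\|_{\dot{\Lambda}_{\beta}}|Q|^{\beta/n}$, and then factor this constant out of the variable Lebesgue norm to obtain (\ref{equ.mc-lip}) for every $s(\cdot)\in\mathscr{B}(\rn)$. For $(4)\Rightarrow(1)$, I would combine the generalized Hölder inequality in variable Lebesgue spaces with the norm identity $\|\chi_{Q}\|_{L^{s(\cdot)}(\rn)}\|\chi_{Q}\|_{L^{s'(\cdot)}(\rn)}\approx|Q|$ valid when $s(\cdot)\in\mathscr{B}(\rn)$, reducing (\ref{equ.mc-lip}) to the scalar Campanato estimate $\frac{1}{|Q|}\int_{Q}|b(x)-b_{Q}|\,dx\le C|Q|^{\beta/n}$, which by the classical Meyers--Campanato characterization forces $b\in\dot{\Lambda}_{\beta}(\rn)$.

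For $(1)\Rightarrow(3)$, the key tool is the pointwise domination
\[
M_{\alpha,b}(f)(x)\le C\|b\|_{\dot{\Lambda}_{\beta}}M_{\alpha+\beta}(f)(x),
\]
obtained by inserting $|b(x)-b(y)|\le\|b\|_{\dot{\Lambda}_{\beta}}|x-y|^{\beta}\le C\|b\|_{\dot{\Lambda}_{\beta}}|Q|^{\beta/n}$ into the defining supremum of $M_{\alpha,b}$. One then invokes the Capone--Cruz~Uribe--Fiorenza mapping theorem for the fractional maximal operator on variable Lebesgue spaces, whose hypotheses are precisely built into the definition of $\mathscr{B}^{\alpha+\beta}(\rn)$; thus $M_{\alpha+\beta}\colon L^{p(\cdot)}(\rn)\to L^{q(\cdot)}(\rn)$ is bounded for every admissible pair.

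For $(2)\Rightarrow(4)$, the approach is to test against $f=\chi_{Q}$ and use that for $x\in Q$,
\[
M_{\alpha,b}(\chi_{Q})(x)\ge\frac{1}{|Q|^{1-\alpha/n}}\int_{Q}|b(x)-b(y)|\,dy\ge|Q|^{\alpha/n}|b(x)-b_{Q}|.
\]
Taking $L^{q(\cdot)}(\rn)$-norms, applying the boundedness hypothesis, dividing by $\|\chi_{Q}\|_{L^{q(\cdot)}(\rn)}$, and using the characteristic-function identity $\|\chi_{Q}\|_{L^{p(\cdot)}(\rn)}/\|\chi_{Q}\|_{L^{q(\cdot)}(\rn)}\approx|Q|^{(\alpha+\beta)/n}$ (valid for pairs in $\mathscr{B}^{\alpha+\beta}(\rn)$) yields (\ref{equ.mc-lip}) with $s(\cdot)=q(\cdot)$, which belongs to $\mathscr{B}(\rn)$ by Remark~\ref{rem.cfmp-2}. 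The main obstacles I anticipate are the two auxiliary facts I am relying on as black boxes from the variable-exponent literature: the Sobolev-type boundedness of the fractional maximal function and the norm identity for characteristic functions. Both should be available as lemmas established in \cite{cfmp} and the other cited references, so the argument reduces to a clean assembly rather than a novel technical core.
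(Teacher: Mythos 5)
Your proposal is correct and matches the paper's argument for the two implications the paper actually proves: $(1)\Rightarrow(3)$ via the pointwise bound $M_{\alpha,b}(f)\le C\|b\|_{\dot{\Lambda}_{\beta}}M_{\alpha+\beta}(f)$ together with the variable-exponent boundedness of $M_{\alpha+\beta}$, and $(2)\Rightarrow(4)$ by testing on $\chi_Q$, using $|b(x)-b_Q|\chi_Q(x)\le|Q|^{-\alpha/n}M_{\alpha,b}(\chi_Q)(x)$ and the estimate $\|\chi_Q\|_{L^{p(\cdot)}}\le C|Q|^{(\alpha+\beta)/n}\|\chi_Q\|_{L^{q(\cdot)}}$. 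The only difference is organizational: the paper delegates the equivalence $(1)\Leftrightarrow(4)\Leftrightarrow(5)$ to a prior-work citation, while you supply the short direct proof via the Lipschitz bound $|b(x)-b_Q|\lesssim\|b\|_{\dot{\Lambda}_{\beta}}|Q|^{\beta/n}$ for $(1)\Rightarrow(5)$ and via generalized H\"older plus $\|\chi_Q\|_{L^{s(\cdot)}}\|\chi_Q\|_{L^{s'(\cdot)}}\approx|Q|$ and the Campanato characterization for $(4)\Rightarrow(1)$, which is precisely the standard argument behind that citation.
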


\begin{remark} \label{rem.thm.mc-lip}   
For the case $\alpha=0$, similar results were given in \cite{zhp1}
for Lebesgue spaces with constant exponents and in \cite{zhp2} for
the variable case.
\end{remark}

When $p(\cdot)$ and $q(\cdot)$ are constants, we get the following
results from Theorem \ref{thm.mc-lip}.

\begin{corollary}  \label{cor.mc-lip}
Let $0<\beta<1$, $0<\alpha<n$, $0<\alpha+\beta<n$ and $b$ be a
locally integrable function. Then the following statements are
equivalent:

(1) $b\in {\dot{\Lambda}_{\beta}(\rn)}$.

(2) $M_{\alpha,b}$ is bounded from $L^p(\rn)$ to $L^q(\rn)$ for some
$p$ and $q$ such that $1<p<n/(\alpha+\beta)$ and
$1/q=1/p-(\alpha+\beta)/n$.

(3) $M_{\alpha,b}$ is bounded from $L^p(\rn)$ to $L^q(\rn)$ for all
$p$ and $q$ such that $1<p<n/(\alpha+\beta)$ and
$1/q=1/p-(\alpha+\beta)/n$.

(4) There exists $s\in [1,\infty)$ such that
\begin{equation}             \label{equ.cor.mc-lip}
\sup_Q \frac1{|Q|^{\beta/n}} \bigg(\frac1{|Q|}
\int_Q|b(x)-b_Q|^sdx\bigg)^{1/s}<\infty
\end{equation}

(5) For all $s\in [1,\infty)$ we have (\ref{equ.cor.mc-lip}).
\end{corollary}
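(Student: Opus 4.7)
The plan is to derive Corollary \ref{cor.mc-lip} directly from Theorem \ref{thm.mc-lip} by specializing to constant exponents, with one extra argument needed for the endpoint $s=1$.

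First I would verify that the constant-exponent range in the Corollary matches $\mathscr{B}^{\alpha+\beta}(\rn)$. If $p$ is constant with $1<p<n/(\alpha+\beta)$ and $1/q=1/p-(\alpha+\beta)/n$, then $p\in\mathscr{P}(\rn)$; a short computation using $p>1$ shows that $q(n-\alpha-\beta)/n = p(n-\alpha-\beta)/(n-p(\alpha+\beta))>1$, and since every constant exponent greater than $1$ lies in $\mathscr{B}(\rn)$ by the classical Hardy--Littlewood maximal theorem, we get $(p,q)\in\mathscr{B}^{\alpha+\beta}(\rn)$. The converse direction is equally straightforward. Hence statements (2) and (3) of the Corollary are exactly (2) and (3) of Theorem \ref{thm.mc-lip} restricted to constant exponents.

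For the Morrey-type conditions, I would note that for any constant $s>1$ one has $s\in\mathscr{B}(\rn)$ and the elementary identities $\|\chi_Q\|_{L^s(\rn)}=|Q|^{1/s}$ and $\|(b-b_Q)\chi_Q\|_{L^s(\rn)}=\bigl(\int_Q|b-b_Q|^s\,dx\bigr)^{1/s}$, so (\ref{equ.mc-lip}) with $s(\cdot)\equiv s$ becomes (\ref{equ.cor.mc-lip}). Thus Theorem \ref{thm.mc-lip} already supplies the equivalence of (1) with the conditions (4) and (5) of the Corollary as long as we restrict to $s>1$.

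The only piece not immediate from the Theorem is the endpoint $s=1$, which falls outside $\mathscr{B}(\rn)$ because $\mathscr{P}(\rn)$ requires $p_->1$. For the implication $(1)\Rightarrow(5)$ at $s=1$ one can use H\"older's inequality to dominate $|Q|^{-1}\int_Q|b-b_Q|\,dx$ by $\bigl(|Q|^{-1}\int_Q|b-b_Q|^{s_1}\,dx\bigr)^{1/s_1}$ for any fixed $s_1>1$, reducing to the already settled case. For $(4)\Rightarrow(1)$ at $s=1$ one invokes the classical Campanato--Meyers characterization of $\dot{\Lambda}_\beta(\rn)$, which asserts that such an $L^1$-mean oscillation bound is itself sufficient for $b\in\dot{\Lambda}_\beta(\rn)$. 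This endpoint step is the only substantive point in the argument; everything else is a formal specialization of Theorem \ref{thm.mc-lip}.
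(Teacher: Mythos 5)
Your proposal is correct and takes essentially the same route the paper does: the corollary is obtained by restricting Theorem \ref{thm.mc-lip} to constant exponents, with the $L^1$-mean-oscillation characterization of $\dot{\Lambda}_\beta(\rn)$ supplying the endpoint $s=1$. The computation showing that constant $p,q$ with $1<p<n/(\alpha+\beta)$ and $1/q=1/p-(\alpha+\beta)/n$ give a pair in $\mathscr{B}^{\alpha+\beta}(\rn)$ is exactly right (the key point being $q(n-\alpha-\beta)/n = p(n-\alpha-\beta)/(n-p(\alpha+\beta)) > 1$), as is the observation that $\|\chi_Q\|_{L^s}=|Q|^{1/s}$ collapses (\ref{equ.mc-lip}) to (\ref{equ.cor.mc-lip}).

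The one small organizational difference: where you patch the $s=1$ case by H\"older plus an appeal to the ``classical Campanato--Meyers characterization,'' the paper simply cites its Lemma \ref{lem.lip} (DeVore--Sharpley, Janson--Taibleson--Weiss), which already covers the full range $1\le s<\infty$ and therefore gives the equivalence of (1), (4), (5) in one stroke without any separate endpoint argument. Your appeal to Campanato--Meyers is the same fact by another name, so there is no gap -- it is just slightly more piecemeal than invoking Lemma \ref{lem.lip} directly.
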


\begin{remark} \label{rem.cor.mc-lip}   
The equivalence of (1), (2) and (3) was proved in \cite{zhp1} (for
$\alpha=0$) and in \cite{gdh} (for $0<\alpha<n$). The equivalence of
(1), (4) and (5) is contained in Lemma \ref{lem.lip} below.
\end{remark}

Finally, for the case of completeness of this paper, we state a
result similar to Theorem \ref{thm.mc-lip} without proof, which can
be deduced from \cite{zhw2} and \cite{izuki2}.

\begin{theorem}        \label{thm.mc-bmo} 
Let $0<\alpha<n$ and $b$ be a locally integrable function. Then the
following statements are equivalent:

(1) $b\in {BMO(\rn)}$.

(2) $M_{\alpha,b}$ is bounded from $L^{p(\cdot)}(\rn)$ to
$L^{q(\cdot)}(\rn)$ for some
$\big(p(\cdot),q(\cdot)\big)\in\mathscr{B}^{\alpha}(\rn)$.

(3) $M_{\alpha,b}$ is bounded from $L^{p(\cdot)}(\rn)$ to
$L^{q(\cdot)}(\rn)$ for all
$\big(p(\cdot),q(\cdot)\big)\in\mathscr{B}^{\alpha}(\rn)$.

(4) There exists $s(\cdot)\in \mathscr{B}(\rn)$ such that
\begin{equation}             \label{equ.mc-bmo}
\sup_{Q} \frac{\|(b-b_{Q}) \chi_{Q}\|_{L^{s(\cdot)}(\rn)}}
{\|\chi_{Q}\|_{L^{s(\cdot)}(\rn)}} <\infty.
\end{equation}

(5) For all $s(\cdot)\in \mathscr{B}(\rn)$ we have
(\ref{equ.mc-bmo}).
\end{theorem}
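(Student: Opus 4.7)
The plan is to deduce the five equivalences by partitioning them into two groups that rely on different sources. The equivalence (1)$\Leftrightarrow$(2)$\Leftrightarrow$(3) has already been established in \cite{zhw2}, while (1)$\Leftrightarrow$(4)$\Leftrightarrow$(5) will follow from the Izuki-type characterization of $BMO(\rn)$ via variable Lebesgue norms proved in \cite{izuki2}. Since the theorem is stated without proof, the task is to verify that the pieces assemble correctly.

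For (1)$\Rightarrow$(3), the natural strategy is to establish a pointwise bound of the shape $M_{\alpha,b}(f)(x)\le C\|b\|_{BMO}\,M_{\alpha r}(|f|^{r})(x)^{1/r}$ for some $r>1$ sufficiently close to $1$, via the John--Nirenberg inequality on each competing cube. One then concludes by the variable-exponent $L^{p(\cdot)/r}\to L^{q(\cdot)/r}$ boundedness of $M_{\alpha r}$, which is available under $(p(\cdot),q(\cdot))\in\mathscr{B}^\alpha(\rn)$ via Remark \ref{rem.cfmp-2} and Remark \ref{rem.cfmp-1} once $r$ is chosen small enough. The implication (3)$\Rightarrow$(2) is trivial. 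For (2)$\Rightarrow$(1), one tests with $f=\chi_Q$: from the definition of $M_{\alpha,b}$ one has $M_{\alpha,b}(\chi_Q)(x)\ge |Q|^{\alpha/n}\,|b(x)-b_Q|$ for $x\in Q$, so $|Q|^{\alpha/n}\|(b-b_Q)\chi_Q\|_{L^{q(\cdot)}(\rn)}$ is controlled by the operator norm times $\|\chi_Q\|_{L^{p(\cdot)}(\rn)}$, and the norm comparison $\|\chi_Q\|_{L^{p(\cdot)}(\rn)}\approx |Q|^{\alpha/n}\|\chi_Q\|_{L^{q(\cdot)}(\rn)}$ recovers the classical BMO estimate.

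For (1)$\Leftrightarrow$(4)$\Leftrightarrow$(5), the implication (5)$\Rightarrow$(4) is immediate. To obtain (4)$\Rightarrow$(1), apply the generalized H\"older inequality in variable Lebesgue spaces together with the norm identity $\|\chi_Q\|_{L^{s(\cdot)}(\rn)}\,\|\chi_Q\|_{L^{s'(\cdot)}(\rn)}\approx |Q|$ to pass from (\ref{equ.mc-bmo}) to the classical mean-oscillation condition. The crucial direction (1)$\Rightarrow$(5) is precisely the content of Izuki's characterization in \cite{izuki2}: for every $s(\cdot)\in\mathscr{B}(\rn)$ one has
\begin{equation*}
\|b\|_{BMO}\approx \sup_{Q}\,\frac{\|(b-b_Q)\chi_Q\|_{L^{s(\cdot)}(\rn)}}{\|\chi_Q\|_{L^{s(\cdot)}(\rn)}},
\end{equation*}
a fact that rests on the John--Nirenberg inequality and the boundedness of $M$ on $L^{s(\cdot)}(\rn)$.

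The principal difficulty, granting the cited results, is essentially bookkeeping: one must verify that the definition of $\mathscr{B}^\alpha(\rn)$ used here matches the hypotheses under which \cite{zhw2} operates, and check that Izuki's characterization in \cite{izuki2} indeed holds uniformly for every $s(\cdot)\in\mathscr{B}(\rn)$ rather than for some restricted subclass. Both checks are straightforward from the references, which is presumably why the full argument is omitted in the present paper.
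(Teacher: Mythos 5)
Your overall route coincides with the paper's: Theorem \ref{thm.mc-bmo} is stated without proof, and Remark \ref{rem.thm.mc-bmo} explicitly assembles it from \cite{zhw2} (Theorems 3.1 and 3.2, giving the equivalence of (1), (2), (3)) and from \cite[Lemma 3]{izuki2} (giving the equivalence of (1), (4), (5)), which is precisely the partition you propose. Your sketches of (2)$\Rightarrow$(1) via testing on $\chi_Q$ together with Lemma \ref{lem.zhp2}, of (4)$\Rightarrow$(1) via generalized H\"older's inequality and Lemma \ref{lem.izuki}, and of (1)$\Rightarrow$(5) via Izuki's norm characterization of $BMO(\rn)$ are all consistent with those references.

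One caution, though, about the sketch you offer for (1)$\Rightarrow$(3): the pointwise estimate
$M_{\alpha,b}(f)(x)\le C\|b\|_{BMO}\,M_{\alpha r}(|f|^{r})(x)^{1/r}$
is false as written. In the decomposition $|b(x)-b(y)|\le|b(x)-b_Q|+|b(y)-b_Q|$, the John--Nirenberg inequality does control the second piece by $M_{\alpha r}(|f|^r)(x)^{1/r}$, but the first piece yields $|b(x)-b_Q|\cdot\frac{1}{|Q|^{1-\alpha/n}}\int_Q|f|$, and $\sup_{Q\ni x}|b(x)-b_Q|$ is infinite for a general $b\in BMO(\rn)$. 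Concretely, with $b(x)=\log(1/|x|)\chi_{\{|x|<1\}}$ and $f=\chi_{\{1/2<|y|<1\}}$, taking the cube $Q(0,2)$ shows $M_{\alpha,b}(f)(x)\gtrsim\log(1/|x|)\to\infty$ as $x\to 0$, whereas $M_{\alpha r}(|f|^r)(x)^{1/r}$ remains bounded near the origin. The estimates actually used in the literature (and in \cite{zhw2}) for $M_{\alpha,b}$ with $b\in BMO$ involve iterated or $L\log L$-type maximal operators to absorb the $|b(x)-b_Q|$ contribution, not a single fractional maximal function of $|f|^r$. Since you ultimately defer (1)$\Rightarrow$(3) to \cite{zhw2}, the logical assembly of the theorem stands; just do not present the quoted inequality as the mechanism behind it.
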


\begin{remark} \label{rem.thm.mc-bmo}   
We note that Theorem \ref{thm.mc-bmo} follows from
\cite{zhw2} and \cite{izuki2} directly. Indeed, the equivalence of
(1), (2) and (3) was proved in \cite{zhw2} Theorems 3.1 and 3.2
for $\alpha=0$ and $0<\alpha<n$, respectively, and
the equivalence of (1), (4) and (5) was obtained in \cite[Lemma
3]{izuki2}.
\end{remark}

If $p(\cdot)$ and $q(\cdot)$ are constants, we have a result similar
to Corollary \ref{cor.mc-lip}. We omit the details.

The remainder of this paper is organized as follows. In the next
section, we give some lemmas that will be used later. In Section 3,
we prove Theorems \ref{thm.nc-lip}, \ref{thm.nc-bmo} and
\ref{thm.mc-lip}.

\section{Preliminaries and Lemmas}         \label{preliminaries}

It is known that Lipschitz space $\dot{\Lambda}_{\beta}(\rn)$
coincides with some Morrey-Companato space (see, e.g., \cite{jtw})
and can be characterized by mean oscillation as the following lemma,
which is due to DeVore and Sharpley \cite{ds} and Janson, Taibleson
and Weiss \cite{jtw} (see also Paluszy\'nski \cite{p}).

\begin{lemma}    \label{lem.lip}
Let $0<\beta<1$ and $1\le {q}<\infty$. Define
$$\dot{\Lambda}_{\beta,q}(\rn):=\bigg\{f \in{L_{\rm{loc}}^1(\rn)}:
\|f\|_{\dot{\Lambda}_{\beta,q}} = \sup_Q \frac1{|Q|^{\beta/n}}
\bigg(\frac1{|Q|} \int_Q|f(x)-f_Q|^qdx\bigg)^{1/q}<\infty \bigg\}.
$$
Then, for all $0<\beta<1$ and $1\le {q}<\infty$,
$\dot{\Lambda}_{\beta}(\rn)=\dot{\Lambda}_{\beta,q}(\rn)$
with equivalent norms.
\end{lemma}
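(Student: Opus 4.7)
The plan is to establish the norm equivalence via the chain
$\|b\|_{\dot{\Lambda}_{\beta,1}}\le \|b\|_{\dot{\Lambda}_{\beta,q}}\le C\|b\|_{\dot{\Lambda}_{\beta}}\le C'\|b\|_{\dot{\Lambda}_{\beta,1}}$.
Since $q\ge 1$ is fixed but arbitrary, any of the norms bounds and is bounded by the others, and consequently the function classes coincide with comparable norms.

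The first two inequalities are routine. For the estimate $\|b\|_{\dot{\Lambda}_{\beta,1}}\le \|b\|_{\dot{\Lambda}_{\beta,q}}$, Hölder's inequality applied to the normalized average on $Q$ gives $\frac{1}{|Q|}\int_Q|b(x)-b_Q|\,dx\le \bigl(\frac{1}{|Q|}\int_Q|b(x)-b_Q|^q\,dx\bigr)^{1/q}$, and taking the supremum after division by $|Q|^{\beta/n}$ yields the claim. For $\|b\|_{\dot{\Lambda}_{\beta,q}}\le C\|b\|_{\dot{\Lambda}_{\beta}}$, if $b\in\dot{\Lambda}_{\beta}(\rn)$ and $x\in Q$, then pointwise $|b(x)-b_Q|\le \frac{1}{|Q|}\int_Q|b(x)-b(y)|\,dy\le \|b\|_{\dot{\Lambda}_{\beta}}(\mathrm{diam}\,Q)^{\beta}\le C\|b\|_{\dot{\Lambda}_{\beta}}|Q|^{\beta/n}$, and averaging the $q$-th power on $Q$ gives the desired bound.

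The main step is the reverse direction $\|b\|_{\dot{\Lambda}_{\beta}}\le C\|b\|_{\dot{\Lambda}_{\beta,1}}$, for which I would use a dyadic telescoping argument at Lebesgue points. Given Lebesgue points $x,y\in\rn$, select a cube $Q_0$ containing both of them with side length comparable to $|x-y|$ and build a chain $Q_0\supset Q_1\supset Q_2\supset\cdots$ with $x\in Q_k$ and $|Q_{k+1}|=2^{-n}|Q_k|$, so that $b_{Q_k}\to b(x)$. Writing $b(x)-b_{Q_0}=\sum_{k\ge 0}(b_{Q_{k+1}}-b_{Q_k})$ and bounding each summand by $\frac{1}{|Q_{k+1}|}\int_{Q_{k+1}}|b-b_{Q_k}|\le 2^n\cdot\frac{1}{|Q_k|}\int_{Q_k}|b-b_{Q_k}|\le 2^n\|b\|_{\dot{\Lambda}_{\beta,1}}|Q_k|^{\beta/n}$ reduces the estimate to a convergent geometric series in $2^{-k\beta/n}$, giving $|b(x)-b_{Q_0}|\le C\|b\|_{\dot{\Lambda}_{\beta,1}}|Q_0|^{\beta/n}$. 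The analogous bound at $y$ and the triangle inequality then yield $|b(x)-b(y)|\le C\|b\|_{\dot{\Lambda}_{\beta,1}}|x-y|^{\beta}$. The main technical obstacles are twofold: the telescoping estimate is initially obtained only at Lebesgue points, so one must identify $b$ with its continuous representative to conclude a true pointwise Lipschitz bound; and the chain $\{Q_k\}$ must be constructed so that $x$ stays inside every $Q_k$ while the side lengths halve at each scale, which requires translating the dyadic grid based on $x$ rather than using a fixed grid.
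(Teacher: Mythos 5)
Your proof sketch is correct, and it is worth noting up front that the paper itself does not prove this lemma: it is stated as a known fact and attributed to DeVore--Sharpley, Janson--Taibleson--Weiss, and Paluszy\'nski, so there is no in-paper argument to compare against. What you have reconstructed is the classical Campanato/Meyers telescoping proof of the mean-oscillation characterization of H\"older spaces, and the three links in your chain are all sound: the inequality $\|b\|_{\dot{\Lambda}_{\beta,1}}\le\|b\|_{\dot{\Lambda}_{\beta,q}}$ is Jensen's inequality on the normalized measure $|Q|^{-1}dx$; the inequality $\|b\|_{\dot{\Lambda}_{\beta,q}}\le C\|b\|_{\dot{\Lambda}_{\beta}}$ follows from the pointwise bound $|b(x)-b_Q|\le\|b\|_{\dot{\Lambda}_{\beta}}(\operatorname{diam} Q)^{\beta}\le n^{\beta/2}\|b\|_{\dot{\Lambda}_{\beta}}|Q|^{\beta/n}$; and the telescoping estimate, with $|b_{Q_{k+1}}-b_{Q_k}|\le 2^n\|b\|_{\dot{\Lambda}_{\beta,1}}|Q_0|^{\beta/n}2^{-k\beta}$ summing geometrically because $\beta>0$, does close the loop once you pass from Lebesgue points to a H\"older-continuous representative.

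Two small remarks on the technical obstacles you flag, both of which you handle correctly but could state more crisply. For the shrinking chain, you do not actually need to translate a dyadic grid: simply bisect $Q_0$ into $2^n$ congruent subcubes, let $Q_1$ be one containing $x$, and iterate; this gives $x\in Q_k$, $Q_{k+1}\subset Q_k$, and $|Q_{k+1}|=2^{-n}|Q_k|$ with no reference to any ambient grid. For the convergence $b_{Q_k}\to b(x)$, the cubes produced this way satisfy $x\in Q_k$ and $\operatorname{diam}(Q_k)\to0$ with bounded eccentricity relative to balls centered at $x$, so they form a regular family and the Lebesgue differentiation theorem applies at every Lebesgue point of $b$; after obtaining the a.e.\ estimate $|b(x)-b(y)|\le C\|b\|_{\dot{\Lambda}_{\beta,1}}|x-y|^{\beta}$, one modifies $b$ on a null set to get a genuine H\"older representative, which is exactly the convention implicit in Definition~\ref{def.lip}. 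In short, the proposal is a complete and correct self-contained proof of a result the paper only cites.
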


From the proof of Theorem 1.4 in \cite{zhp1}, we can obtain the
following characterization of nonnegative Lipschitz functions.

\begin{lemma}   \label{lem.lip+}
Let $0<\beta<1$ and $b$ be a locally integrable function. Then the
following statements are equivalent:

(1) If $b\in {\dot{\Lambda}_{\beta}(\rn)}$ and $b\ge 0$.

(2) For all $1\le {s}<\infty$,
\begin{equation}    \label{equ.lem.lip+}
\sup_{Q} \frac1{|Q|^{\beta/n}} \bigg(\frac1{|Q|} \int_Q
|b(x)-M_{Q}(b)(x)|^s dx \bigg)^{1/s} <\infty.
\end{equation}

(3) (\ref{equ.lem.lip+}) holds for some $1\le {s}<\infty$.
\end{lemma}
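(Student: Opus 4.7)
The plan is to run the cycle $(1)\Rightarrow(2)\Rightarrow(3)\Rightarrow(1)$; the middle implication is immediate from H\"older's inequality, so only the two endpoint implications need argument. Throughout I will let $C$ denote a harmless constant.

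For $(1)\Rightarrow(2)$, I would exploit that $b\ge 0$ makes $M_Q b\ge b$ a.e.\ on $Q$ (Lebesgue differentiation), so the absolute value in (\ref{equ.lem.lip+}) drops. For any subcube $Q'\subseteq Q$ with $x\in Q'$ the Lipschitz hypothesis gives
\[
\frac{1}{|Q'|}\int_{Q'} b(y)\,dy - b(x) \le \frac{1}{|Q'|}\int_{Q'}|b(y)-b(x)|\,dy \le C\,\|b\|_{\dot{\Lambda}_{\beta}}|Q|^{\beta/n},
\]
and taking the supremum over $Q'$ yields $M_Q b(x)-b(x)\le C\,\|b\|_{\dot{\Lambda}_{\beta}}|Q|^{\beta/n}$ uniformly in $x\in Q$, which integrates at once to (\ref{equ.lem.lip+}) for every $s\in[1,\infty)$.

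The harder direction is $(3)\Rightarrow(1)$, which I would split in two. First, to prove $b\ge 0$ a.e., I would use $M_Q b(x)\ge|b(x)|$ a.e.\ (Lebesgue differentiation once more) together with a case split on the sign of $b(x)$ to obtain the pointwise bound $|M_Q b(x)-b(x)|\ge 2\,b^{-}(x)$. Substituting this into the hypothesis (\ref{equ.lem.lip+}) for the given $s$ gives $\frac{1}{|Q|}\int_Q (b^-)^s\,dx \le C\,|Q|^{s\beta/n}$, and shrinking $Q$ to a Lebesgue point of $(b^-)^s$ forces $b^{-}(x)=0$ a.e.

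Second, with nonnegativity in hand, I would verify the characterization of Lemma \ref{lem.lip} at the level $q=1$. Since $M_Q b\ge b_Q$ on $Q$ (testing with $Q'=Q$) and $M_Q b\ge b$ a.e., one has $(b-b_Q)^{+}\le M_Q b - b_Q$ pointwise on $Q$. Combining the identity $\int_Q(b-b_Q)^{+}\,dx=\tfrac{1}{2}\int_Q|b-b_Q|\,dx$ with $\int_Q(M_Q b - b_Q)\,dx=\int_Q(M_Q b - b)\,dx$ (valid since $\int_Q b=|Q|b_Q$) yields the key $L^1$ comparison
\[
\frac{1}{|Q|}\int_Q|b-b_Q|\,dx \le \frac{2}{|Q|}\int_Q|M_Q b - b|\,dx,
\]
and H\"older's inequality together with (\ref{equ.lem.lip+}) then gives the desired $\dot{\Lambda}_{\beta,1}$ bound, hence $b\in\dot{\Lambda}_{\beta}(\rn)$ by Lemma \ref{lem.lip}. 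The main obstacle is precisely this $L^1$ comparison: one must reduce a two-sided oscillation to the one-sided quantity controlled by the hypothesis, and that step genuinely uses the nonnegativity just established in the first half.
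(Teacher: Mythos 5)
Your proof is correct, and it takes a genuinely different route from the paper's. The paper disposes of this lemma almost entirely by citation: $(3)\Rightarrow(1)$ is referred to \cite[Theorem~1.4]{zhp1}, and $(1)\Rightarrow(2)$ is obtained by invoking the same theorem to get (\ref{equ.lem.lip+}) in the range $n/(n-\beta)<s<\infty$ and then interpolating down to $1\le s\le n/(n-\beta)$ via H\"older. You instead prove everything from first principles. For $(1)\Rightarrow(2)$ you derive the uniform pointwise bound $0\le M_Q b(x)-b(x)\le C\|b\|_{\dot\Lambda_\beta}|Q|^{\beta/n}$ on $Q$, which gives all $s$ at once without any detour through a restricted exponent range — a cleaner argument. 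For $(3)\Rightarrow(1)$ your two steps (the pointwise inequality $|M_Q b-b|\ge 2b^-$ forcing $b^-=0$ a.e., and the Bastero--Milman--Ruiz style $L^1$ comparison $\int_Q|b-b_Q|\le 2\int_Q|M_Q b-b|$, valid once $b\ge 0$) reconstruct the substance of \cite[Theorem 1.4]{zhp1} rather than citing it; all the steps check out, including the need for $(b^-)^s\in L^1_{\mathrm{loc}}$ (which follows from the hypothesis itself) before applying Lebesgue differentiation. One cosmetic slip: $(2)\Rightarrow(3)$ is literally a weakening of a universal to an existential quantifier, so it is immediate but not "by H\"older" — H\"older plays no role there. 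Net: your version is more self-contained and transparent than the paper's, at the cost of reproving material the paper chooses to import.
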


\begin{proof}
Since the implication (2)$\Rightarrow$(3) follows readily and
the implication (3)$\Rightarrow$(1) was proved in \cite[Theorem 1.4]{zhp1},
we only need to prove (1)$\Rightarrow$(2).

If $b\in {\dot{\Lambda}_{\beta}(\rn)}$ and $b\ge 0$, then it follows
from \cite[Theorem 1.4]{zhp1} that (\ref{equ.lem.lip+}) holds for
all $s$ with $n/(n-\beta)<s<\infty$. Applying H\"older's inequality
we see that (\ref{equ.lem.lip+}) also holds for $1\le {s}\le
{n/(n-\beta)}$. So, the implication $(1)\Rightarrow(2)$ is proven.
\end{proof}

\begin{lemma}[\cite{bmr}] \label{lem.bmr}
Let $b$ be a locally integrable function. Then the following
statements are equivalent:

(1) $b\in {BMO(\rn)}$ and $b^{-} \in {L^{\infty}(\rn)}$.

(2) There exists $s\in[1,\infty)$ such that
\begin{equation}         \label{equ.lem.bmr}
\sup_{Q} \frac1{|Q|} \int_Q\big|b(x)-M_{Q}(b)(x)\big|^sdx <\infty.
\end{equation}

(3) For all $s\in[1,\infty)$ we have (\ref{equ.lem.bmr}).
\end{lemma}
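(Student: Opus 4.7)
The plan is to establish the equivalence by the cyclic chain (3)$\Rightarrow$(2)$\Rightarrow$(1)$\Rightarrow$(3). The implication (3)$\Rightarrow$(2) is immediate, so the real work splits between (2)$\Rightarrow$(1), where one must extract two distinct pieces of information (an $L^{\infty}$ bound on $b^{-}$ and a $BMO$ bound on $b$) from a single uniform $L^s$ control, and (1)$\Rightarrow$(3), which reduces to a maximal-function estimate combined with John--Nirenberg.

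For (2)$\Rightarrow$(1), I would start from the pointwise identity $M_Q(b)(x)=M_Q(|b|)(x)\ge|b(x)|\ge b(x)$ valid for a.e.\ $x\in Q$ by Lebesgue differentiation applied to $|b|$ on shrinking subcubes of $Q$. This forces $|b(x)-M_Q(b)(x)|=M_Q(|b|)(x)-b(x)\ge 2b^{-}(x)$, so hypothesis (2) yields $\tfrac{1}{|Q|}\int_Q(2b^{-})^s\le K^s$ uniformly in $Q$; contracting $Q$ around a Lebesgue point of $(b^{-})^s$ gives $\|b^{-}\|_{\infty}\le K/2$. To then pull out $b\in BMO(\rn)$, I would pivot from $b_Q$ to $(|b|)_Q$ (the two differ by $2(b^{-})_Q$, now known to be bounded) and estimate $\tfrac{1}{|Q|}\int_Q|b-(|b|)_Q|$ by splitting through $M_Q(|b|)$: the piece $|b-M_Q(|b|)|$ equals $M_Q(b)-b$ and is bounded by hypothesis, while the piece $M_Q(|b|)-(|b|)_Q$ is nonnegative on $Q$, so its $L^1$ mean collapses to $(M_Q(|b|))_Q-(|b|)_Q$, which is itself at most $\tfrac{1}{|Q|}\int_Q|b-M_Q(b)|$ via the pointwise bound $M_Q(|b|)\le|b|+|M_Q(b)-b|$.

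For (1)$\Rightarrow$(3), for $s>1$ I would decompose
$$M_Q(|b|)(x)-b(x)=\bigl[M_Q(|b|)(x)-(|b|)_Q\bigr]+\bigl[(|b|)_Q-b_Q\bigr]+\bigl[b_Q-b(x)\bigr]$$
and estimate each bracket in $L^s$-mean separately. The middle bracket equals $-2(b^{-})_Q$, with modulus at most $2\|b^{-}\|_{\infty}$. The last bracket is handled by John--Nirenberg applied to $b\in BMO(\rn)$. For the first, nonnegative bracket, I would use the pointwise bound $M_Q(|b|)-(|b|)_Q\le M_Q\bigl(|b|-(|b|)_Q\bigr)$ (from comparing subcube averages), the $L^s$-boundedness of the local maximal operator $M_Q$ for $s>1$, and John--Nirenberg for $|b|\in BMO(\rn)$ (which holds since $\||b|\|_{BMO}\le 2\|b\|_{BMO}$). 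The edge case $s=1$ is then recovered by H\"older's inequality from $s=2$.

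The main obstacle will be the $BMO$ half of (2)$\Rightarrow$(1): one must control $\tfrac{1}{|Q|}\int_Q|b-b_Q|$ without already knowing $b\in BMO$, and a naive split through $M_Q(b)-b_Q$ is circular because that term resists direct control. The resolution is to pivot through $(|b|)_Q$ using the $L^{\infty}$ bound on $b^{-}$ just obtained, and to exploit the one-sided inequality $M_Q(|b|)\ge(|b|)_Q$ so that only the \emph{mean} of the nonnegative remainder, rather than its full $L^s$ norm, needs to be bounded.
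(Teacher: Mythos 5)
The paper does not prove this lemma at all; it is imported verbatim from Bastero--Milman--Ruiz \cite{bmr}, so there is no in-paper argument to compare line by line. That said, your proof is correct and self-contained, and it is worth contrasting with the \cite{bmr} route, a fragment of which the paper does reproduce inside the proof of Lemma \ref{lem.nc-lip}. For the delicate $BMO$ half of $(2)\Rightarrow(1)$ you pivot through $(|b|)_Q$ and invoke the pointwise upper bound $M_Q(|b|)\le |b|+(M_Q(b)-b)$, which costs a factor of $2$ and a short chain of triangle inequalities. The \cite{bmr} argument instead splits $Q$ into $E=\{b\le b_Q\}$ and $F=\{b>b_Q\}$, uses the mean-zero identity $\int_E|b-b_Q|=\int_F|b-b_Q|$, and then exploits the one-sided chain $b(x)\le b_Q\le|b_Q|\le M_Q(b)(x)$ valid for $x\in E$ (the last inequality because the sup defining $M_Q(b)(x)$ includes $Q$ itself) to get the pointwise bound $|b(x)-b_Q|\le|b(x)-M_Q(b)(x)|$ on $E$, whence $\frac{1}{|Q|}\int_Q|b-b_Q|\le\frac{2}{|Q|}\int_Q|b-M_Q(b)|$ in one step. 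The two approaches land in the same place; the $E/F$ symmetry trick is slicker for that direction, while your decomposition of $M_Q(|b|)-b$ into three brackets for $(1)\Rightarrow(3)$ (middle term $2(b^-)_Q$, outer terms controlled by John--Nirenberg and the $L^s$-boundedness of the local maximal operator) is essentially the standard argument and is fine as written, including the $s=1$ fallback via H\"older from $s=2$.
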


The following strong-type estimates for the fractional maximal
function is well known, see \cite{d} or \cite{g} for details.

\begin{lemma}    \label{lem.fracmax}
Let $0<\gamma<n$, $1<p<{n/\gamma}$ and $1/q=1/p-\gamma/n$. Then
there exists a positive constant $C(n,\gamma,p)$ such that
$$\|{M}_{\gamma}(f)\|_{L^q(\rn)}\le {C(n,\gamma,p)}\|f\|_{L^p(\rn)}.
$$
\end{lemma}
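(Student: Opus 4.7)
The plan is to reduce the fractional maximal estimate to the classical strong-type bound for the Hardy-Littlewood maximal operator $M$ via a pointwise comparison. Concretely, I would establish that for every $x\in\rn$ and every $f\in L^p(\rn)$,
\[
M_{\gamma}(f)(x) \le C\, \|f\|_{L^p(\rn)}^{1-p/q}\,\bigl(Mf(x)\bigr)^{p/q},
\]
after which raising to the $q$-th power, integrating, and invoking the $L^p$-boundedness of $M$ (for $1<p<\infty$) closes the argument since $q-p = p(q/p-1)$ and $\int(Mf)^p \le C\|f\|_{L^p}^p$.

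To prove the pointwise inequality, I would fix $x$, fix a cube $Q\ni x$ of side length $r$ so that $|Q|=r^n$, and bound the averaged integral in two complementary ways. The \emph{local} bound factors out $|Q|^{\gamma/n}$ to give
\[
\frac{1}{|Q|^{1-\gamma/n}}\int_{Q}|f(y)|\,dy \;\le\; r^{\gamma}\,Mf(x).
\]
The \emph{global} bound applies H\"older's inequality with exponents $p,p'$:
\[
\frac{1}{|Q|^{1-\gamma/n}}\int_{Q}|f(y)|\,dy \;\le\; |Q|^{\gamma/n-1/p}\,\|f\|_{L^p(\rn)} \;=\; r^{\gamma-n/p}\,\|f\|_{L^p(\rn)},
\]
where the exponent $\gamma-n/p$ is negative thanks to the hypothesis $p<n/\gamma$. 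Balancing the two estimates at the optimal scale $r = (\|f\|_{L^p}/Mf(x))^{p/n}$ gives the claimed pointwise inequality; the identity $1/q = 1/p-\gamma/n$ enters precisely when one checks that the resulting exponents of $\|f\|_{L^p}$ and $Mf(x)$ add up correctly.

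There is really no substantive obstacle here: this is a textbook estimate, and the argument is essentially the one in Grafakos or Duoandikoetxea. The only minor bookkeeping concerns are the degenerate cases $Mf(x)=0$ (which forces $M_{\gamma}(f)(x)=0$, so the bound is trivial) and $Mf(x)=\infty$ (a measure-zero set on which the bound is vacuous); if one prefers to avoid the explicit optimization in $r$, the same conclusion follows directly from Young's inequality applied to the two bounds above. Finally, since the definitions here are stated with cubes rather than balls, no modification is needed beyond absorbing dimensional constants into $C(n,\gamma,p)$.
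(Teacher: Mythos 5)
The paper gives no proof of this lemma, stating it as a known result with references to Duoandikoetxea and Grafakos. Your pointwise interpolation argument---balancing the local bound $r^{\gamma} Mf(x)$ against the H\"older bound $r^{\gamma - n/p}\|f\|_{L^p}$ to obtain $M_{\gamma}f(x) \le C\|f\|_{L^p}^{1-p/q}(Mf(x))^{p/q}$, then integrating and invoking the $L^p$-boundedness of $M$---is precisely the standard textbook proof found in those references, and the exponent bookkeeping ($1 - p/q = p\gamma/n$, so the powers of $r$ cancel and the powers of $\|f\|_{L^p}$ and $Mf(x)$ come out right) is correct.
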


As for the boundedness of the fractional maximal function on
variable Lebesgue spaces, the following result was given in
\cite{cfmp}. See Corollary 2.12 and Remark 2.13 in \cite{cfmp} for
details.

\begin{lemma}       \label{lem.fracmax-var}
Let $0<\gamma<n$, $p(\cdot)\in \mathscr{P}(\rn)$ with
$p_{+}<n/\gamma$ and $1/q(\cdot)=1/p(\cdot)-\gamma/n$. If
$q(\cdot)(n-\gamma)/n \in \mathscr{B}(\rn)$, then ${M}_{\gamma}$ is
bounded from $L^{p(\cdot)}(\rn)$ to $L^{q(\cdot)}(\rn)$.
\end{lemma}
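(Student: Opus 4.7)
The plan is to reduce the claim to a single classical weighted estimate for $M_{\gamma}$ via off-diagonal extrapolation in variable Lebesgue spaces; this is the approach taken in \cite{cfmp} (cf.\ Corollary 2.12 and Remark 2.13 there), and it is what the structural hypothesis $q(\cdot)(n-\gamma)/n\in\mathscr{B}(\rn)$ is tailored to enable.

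First, using Remark \ref{rem.cfmp-2}, I would extract from the hypothesis a constant $q_{0}$ with $n/(n-\gamma)<q_{0}<\infty$ such that $q(\cdot)/q_{0}\in\mathscr{B}(\rn)$, and then define $p_{0}\in(1,n/\gamma)$ by $1/p_{0}-1/q_{0}=\gamma/n$. Combined with the assumption $1/p(\cdot)-1/q(\cdot)=\gamma/n$, this aligns the variable and constant exponents in the off-diagonal sense $1/p(\cdot)-1/p_{0}=1/q(\cdot)-1/q_{0}$, which is the correct template for off-diagonal extrapolation. Second, I would take as the input the classical Muckenhoupt-Wheeden inequality
$$\|M_{\gamma} f\|_{L^{q_{0}}(w^{q_{0}})}\le C\,\|f\|_{L^{p_{0}}(w^{p_{0}})}, \qquad w\in A_{p_{0},q_{0}},$$
and feed it into the off-diagonal Rubio de Francia extrapolation theorem for variable Lebesgue spaces; the output is precisely $\|M_{\gamma} f\|_{L^{q(\cdot)}(\rn)}\le C\,\|f\|_{L^{p(\cdot)}(\rn)}$, provided the hypothesis $q(\cdot)/q_{0}\in\mathscr{B}(\rn)$ secured in the first step is in force.

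The main obstacle is executing the extrapolation step: one must construct, for an arbitrary $h\in L^{(q(\cdot)/q_{0})'}(\rn)$, a Rubio de Francia iteration weight that lies in $A_{p_{0},q_{0}}$ uniformly while being controlled in $L^{(q(\cdot)/q_{0})'}(\rn)$. This iteration requires the boundedness of $M$ on $L^{q(\cdot)/q_{0}}(\rn)$, which is exactly what $q(\cdot)/q_{0}\in\mathscr{B}(\rn)$ supplies; matching these two roles of the hypothesis is the delicate point. A more hands-on alternative would be to establish a Hedberg-type pointwise inequality of the form $M_{\gamma}f(x)\le C\,(Mf(x))^{p_{0}/q_{0}}\,\|f\|_{L^{p(\cdot)}}^{1-p_{0}/q_{0}}$ and reduce to the bound $M:L^{p(\cdot)}\to L^{p(\cdot)}$, but the variable-exponent form of Hedberg's inequality typically needs extra regularity (e.g.\ log-Hölder continuity) of $p(\cdot)$, which is not assumed here, so the extrapolation route is preferable.
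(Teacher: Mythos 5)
The paper does not give its own proof of this lemma; it simply cites Corollary~2.12 and Remark~2.13 of \cite{cfmp}. Your sketch correctly reconstructs the off-diagonal Rubio de Francia extrapolation argument that \cite{cfmp} uses (extracting $q_0$ via Remark~\ref{rem.cfmp-2}, feeding the Muckenhoupt--Wheeden weighted bound for $M_\gamma$ into extrapolation, with $q(\cdot)/q_0\in\mathscr{B}(\rn)$ driving the iteration), so this is essentially the same approach as the cited source, and your observation that the Hedberg route would need extra regularity on $p(\cdot)$ is also well taken.
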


By Lemma \ref{lem.fracmax}, if $0<\gamma<n$, $1<p<{n/\gamma}$ and
$f\in{L^p(\rn)}$, then ${M}_{\gamma}(f)(x)<\infty$ almost
everywhere. A similar result is also valid in variable Lebesgue
spaces.

\begin{lemma}             \label{lem.fracmax-ae}
Let $0<\gamma<n$, $p(\cdot)\in \mathscr{P}(\rn)$ and
$1<p_{-}\le{p_{+}}<n/\gamma$. If $f \in {L^{p(\cdot)}(\rn)}$,
then ${M}_{\gamma}(f)(x)<\infty$ for almost every $x\in\rn$.
\end{lemma}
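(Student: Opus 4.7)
The plan is to reduce the a.e.\ finiteness in the variable Lebesgue setting to the classical constant-exponent statement, namely Lemma \ref{lem.fracmax}, by splitting $f$ into two pieces whose modulus lies above and below a threshold. This is a standard trick in variable exponent analysis and exploits the bounds $p_- \le p(x) \le p_+$ in opposite directions on the two pieces.

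More precisely, since $\|f\|_{L^{p(\cdot)}(\rn)}<\infty$, there exists $\lambda>0$ such that $\int_{\rn}(|f(x)|/\lambda)^{p(x)}\,dx<\infty$. Replacing $f$ by $f/\lambda$ (which only rescales $M_{\gamma}(f)$ by $\lambda$ and does not affect a.e.\ finiteness), I may assume that the modular $\int_{\rn}|f(x)|^{p(x)}\,dx$ itself is finite. Then I would split
\[
f = f_1 + f_2, \qquad f_1 = f\chi_{\{|f|>1\}}, \quad f_2 = f\chi_{\{|f|\le 1\}}.
\]
On $\{|f|>1\}$ the inequality $|f|^{p_-}\le |f|^{p(x)}$ holds because $p(x)\ge p_-$, which gives $f_1\in L^{p_-}(\rn)$; on $\{|f|\le 1\}$ the inequality $|f|^{p_+}\le |f|^{p(x)}$ holds because $p(x)\le p_+$ and $|f|\le 1$, yielding $f_2\in L^{p_+}(\rn)$. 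Both $p_-$ and $p_+$ lie strictly between $1$ and $n/\gamma$ by hypothesis.

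Now I would use sublinearity of $M_{\gamma}$: $M_{\gamma}(f)(x)\le M_{\gamma}(f_1)(x) + M_{\gamma}(f_2)(x)$. Applying Lemma \ref{lem.fracmax} with exponents $p_-$ and $p_+$, I obtain $M_{\gamma}(f_1)\in L^{q_-}(\rn)$ and $M_{\gamma}(f_2)\in L^{q_+}(\rn)$, where $1/q_{\pm}=1/p_{\pm}-\gamma/n>0$. In particular both $M_{\gamma}(f_1)$ and $M_{\gamma}(f_2)$ are finite a.e., hence so is $M_{\gamma}(f)$, completing the proof after undoing the normalization.

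There is no real technical obstacle here; the only point requiring a moment of care is the threshold argument, namely observing that on $\{|f|>1\}$ one controls the $p_-$-th power by the $p(x)$-th power, while on $\{|f|\le 1\}$ the direction is reversed and the $p_+$-th power is controlled. Everything else is an appeal to known results already recorded in the paper.
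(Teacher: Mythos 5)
Your proof is correct and takes essentially the same route as the paper's: the paper also decomposes $f$ into a sum $f_1 + f_2$ with one piece in $L^{p_+}(\rn)$ and the other in $L^{p_-}(\rn)$ (invoking Theorem 2.51 of Cruz-Uribe and Fiorenza rather than carrying out the threshold splitting explicitly), then applies Lemma \ref{lem.fracmax} to each piece and uses sublinearity of $M_\gamma$. Your version is marginally more self-contained since you spell out the $\{|f|>1\}$ / $\{|f|\le 1\}$ decomposition instead of citing it, but the argument is the same.
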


\begin{proof} Following the same procedure of the proof of
\cite[Proposition 3.15]{cf}, we can achieve the desired result.
Indeed, for any  $f \in {L^{p(\cdot)}(\rn)}$, by Theorem 2.51 in
\cite{cf} we can write $f=f_1+f_2$, where $f_1\in{L^{p_{+}}(\rn)}$
and $f_2\in{L^{p_{-}}(\rn)}$. Then ${M}_{\gamma}(f)(x)\le
{M}_{\gamma}(f_1)(x) +{M}_{\gamma}(f_2)(x)$. Noting that $1<p_{-}\le
p_{+}<n/\gamma$ and $0<\gamma<n$, by Lemma \ref{lem.fracmax} we see
that ${M}_{\gamma}(f_1)(x)$ and ${M}_{\gamma}(f_2)(x)$ are finite
almost everywhere. Then ${M}_{\gamma}(f)(x)<\infty$ for almost every
$x\in\rn$.
\end{proof}

We also need some basic properties of variable Lebesgue spaces.
Denoted by $p'(\cdot)$ the conjugate index of $p(\cdot)$. Obviously,
if $p(\cdot)\in\mathscr{P}(\rn)$ then $p(\cdot)\in\mathscr{P}(\rn)$.
The following lemma is known as the generalized H\"older's
inequality in variable Lebesgue spaces. See \cite{cf} and
\cite{dhhr} for details.

\begin{lemma}    \label{lem.holder}
(i) Let $p(\cdot)\in \mathscr{P}(\rn)$. Then there exists a positive
constant $C$ such that for all $f\in L^{p(\cdot)}(\rn)$ and $g\in
{L^{p'(\cdot)}(\rn)}$,
$$\int_{\rn} |f(x)g(x)|dx \le {C} \|f\|_{L^{p(\cdot)}(\rn)}
\|g\|_{L^{p'(\cdot)}(\rn)}.
$$

(ii) Let $p(\cdot), p_1(\cdot), p_2(\cdot)\in \mathscr{P}(\rn)$ and
$1/p(\cdot)=1/p_1(\cdot)+1/p_2(\cdot)$. Then there exists a positive
constant $C$ such that for all $f\in L^{p_1(\cdot)}(\rn)$ and $g\in
{L^{p_2(\cdot)}(\rn)}$,
$$\|fg\|_{L^{p(\cdot)}(\rn)} \le {C} \|f\|_{L^{p_1(\cdot)}(\rn)}
\|g\|_{L^{p_2(\cdot)}(\rn)}.
$$
\end{lemma}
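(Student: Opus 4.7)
The plan is to prove both parts by the standard technique: normalize the functions so their Luxemburg norms equal $1$ and then apply Young's inequality pointwise, integrating against the modular $\rho_{p(\cdot)}(f) = \int_{\rn} |f(x)|^{p(x)}\,dx$. This avoids working directly with the infimum definition and reduces everything to elementary pointwise estimates.

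For part (i), I would first reduce to the case $\|f\|_{L^{p(\cdot)}(\rn)} = \|g\|_{L^{p'(\cdot)}(\rn)} = 1$, which by the definition of the Luxemburg norm and a standard semicontinuity argument gives $\rho_{p(\cdot)}(f) \le 1$ and $\rho_{p'(\cdot)}(g) \le 1$. The pointwise Young inequality
\[
|f(x)g(x)| \le \frac{|f(x)|^{p(x)}}{p(x)} + \frac{|g(x)|^{p'(x)}}{p'(x)},
\]
which is valid a.e.\ because $p(\cdot), p'(\cdot) \in \mathscr{P}(\rn)$, integrates to
\[
\int_{\rn} |f(x)g(x)|\,dx \le \frac{1}{p_{-}} + \frac{1}{(p')_{-}} \le 2.
\]
Undoing the normalization yields (i) with $C = 2$.

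For part (ii), I would set $p_j'(x) := p_j(x)/p(x)$ for $j=1,2$, so that $1/p_1'(x) + 1/p_2'(x) = 1$ pointwise and $p_j'(x) \ge 1$. Normalizing so that $\|f\|_{L^{p_1(\cdot)}} = \|g\|_{L^{p_2(\cdot)}} = 1$, I apply Young's inequality pointwise to $a = |f(x)|^{p(x)}$ and $b = |g(x)|^{p(x)}$ with exponents $p_1'(x), p_2'(x)$ to obtain
\[
|f(x)g(x)|^{p(x)} \le \frac{p(x)}{p_1(x)}|f(x)|^{p_1(x)} + \frac{p(x)}{p_2(x)}|g(x)|^{p_2(x)} \le |f(x)|^{p_1(x)} + |g(x)|^{p_2(x)}.
\]
Integrating gives $\rho_{p(\cdot)}(fg) \le 2$, so $\rho_{p(\cdot)}(fg/\lambda) \le 2/\lambda^{p_{-}} \le 1$ for $\lambda := 2^{1/p_{-}}$. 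Hence $\|fg\|_{L^{p(\cdot)}} \le 2^{1/p_{-}}$, and rescaling finishes (ii).

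The only subtle point is the normalization step: one needs that $\|f\|_{L^{p(\cdot)}} = 1$ forces $\rho_{p(\cdot)}(f) \le 1$, which requires an appropriate monotone/dominated convergence argument with the sublevel sets of $\rho_{p(\cdot)}$ (this uses $p_{+} < \infty$). Once this standard modular-norm relationship is in hand, the rest is routine. Since the lemma is already available in \cite{cf} and \cite{dhhr}, one may alternatively simply invoke those references.
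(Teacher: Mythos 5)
The paper does not prove this lemma but simply cites \cite{cf} and \cite{dhhr}; your proof is correct and is precisely the standard normalize--apply-Young's-inequality--integrate-the-modular argument that those references use, including the needed observation that $\|f\|_{L^{p(\cdot)}}=1$ forces $\rho_{p(\cdot)}(f)\le 1$ via monotone convergence as $\lambda\downarrow 1$. Both the constant $C=2$ in (i) (coming from $1/p_-+1/(p')_-\le 2$, where $(p')_-=(p_+)'<\infty$ since $p_+<\infty$) and the constant $2^{1/p_-}$ in (ii) are correctly derived.
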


\begin{lemma}[\cite{cu-w}] \label{lem.cu-w}
Given $p(\cdot)\in \mathscr{P}(\rn)$, then for all $r>0$ we have
$$\big\||f|^r\big\|_{L^{p(\cdot)}(\rn)}
= \|f\|^r_{L^{rp(\cdot)}(\rn)}.
$$
\end{lemma}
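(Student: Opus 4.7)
The plan is to verify this identity directly from the Luxemburg norm definition via the substitution $\lambda=\mu^{r}$, which is the variable-exponent analogue of the elementary observation $\||f|^{r}\|_{p}=\|f\|_{rp}^{r}$ for constant exponents. First I would write out both sides from the definition:
$$\big\||f|^{r}\big\|_{L^{p(\cdot)}(\rn)} = \inf\bigg\{\lambda>0 : \int_{\rn}\bigg(\frac{|f(x)|^{r}}{\lambda}\bigg)^{p(x)}dx\le 1\bigg\}$$
and
$$\|f\|_{L^{rp(\cdot)}(\rn)} = \inf\bigg\{\mu>0 : \int_{\rn}\bigg(\frac{|f(x)|}{\mu}\bigg)^{rp(x)}dx\le 1\bigg\}.$$

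Next I would observe the pointwise algebraic identity $(|f(x)|^{r}/\mu^{r})^{p(x)}=(|f(x)|/\mu)^{rp(x)}$, so that setting $\lambda=\mu^{r}$ converts the first modular condition into the second. Consequently the admissible set $A=\{\lambda>0:\int_{\rn}(|f(x)|^{r}/\lambda)^{p(x)}dx\le 1\}$ coincides with $\{\mu^{r}:\mu\in B\}$, where $B$ is the admissible set defining $\|f\|_{L^{rp(\cdot)}(\rn)}$. Since for $r>0$ the map $\mu\mapsto\mu^{r}$ is a strictly increasing continuous bijection of $(0,\infty)$ onto itself, it preserves infima, yielding $\inf A=(\inf B)^{r}$, which is exactly the claimed equality.

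There is no genuine obstacle in this argument; the proof is really just bookkeeping once the substitution is spotted. The only minor point worth flagging is that $rp(\cdot)$ need not itself lie in $\mathscr{P}(\rn)$ in the strict sense of this paper when $r$ is small (its essential infimum could drop to or below $1$), but the Luxemburg infimum on the right-hand side is well-defined for any positive measurable exponent, so the identity is valid as stated. One might also wish to record the degenerate cases $\|f\|_{L^{rp(\cdot)}(\rn)}\in\{0,\infty\}$ separately, but in each of these cases both sides are seen to agree by inspection of the modular.
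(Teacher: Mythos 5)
Your proof is correct. The paper does not include a proof of this lemma; it simply cites it from Cruz-Uribe and Wang \cite{cu-w}, and your substitution argument ($\lambda=\mu^r$ combined with the pointwise identity $(|f|^r/\mu^r)^{p(x)}=(|f|/\mu)^{rp(x)}$, and monotonicity of $t\mapsto t^r$ to pass to infima) is precisely the standard proof one would find in that reference.
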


\begin{lemma}[\cite{izuki}] \label{lem.izuki}
Let $p(\cdot)\in \mathscr{B}(\rn)$, then there exists a constant
$C>0$ such that
$$\frac{1}{|Q|} \|\chi_{Q}\|_{L^{p(\cdot)}(\rn)}
\|\chi_{Q}\|_{L^{p'(\cdot)}(\rn)} \le C
$$
for all cubes $Q$ in $\rn$.
\end{lemma}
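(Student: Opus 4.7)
The plan is to exploit the hypothesis $p(\cdot)\in\mathscr{B}(\rn)$, which is precisely the $L^{p(\cdot)}(\rn)$-boundedness of $M$, in conjunction with the duality between $L^{p(\cdot)}(\rn)$ and $L^{p'(\cdot)}(\rn)$. The strategy is to first test $M$ against indicator-like functions supported in a cube $Q$ in order to extract $\|\chi_Q\|_{L^{p(\cdot)}(\rn)}$, and then use the associate-norm (norm-conjugate) formula to recover $\|\chi_Q\|_{L^{p'(\cdot)}(\rn)}$ from the dual pairing.

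For the first step, I would begin from the elementary pointwise inequality valid for every nonnegative $f$ with $\supp f\subset Q$ and every $x\in\rn$:
$$\frac{\chi_Q(x)}{|Q|}\int_Q f(y)\,dy \le M(f)(x),$$
which holds because $Q$ itself is an admissible cube in the supremum defining $M(f)(x)$ whenever $x\in Q$. Taking $L^{p(\cdot)}(\rn)$-norms on both sides and invoking the boundedness of $M$ on $L^{p(\cdot)}(\rn)$ yields
$$\frac{1}{|Q|}\Bigl(\int_Q f(y)\,dy\Bigr)\|\chi_Q\|_{L^{p(\cdot)}(\rn)} \le \|M(f)\|_{L^{p(\cdot)}(\rn)} \le C\|f\|_{L^{p(\cdot)}(\rn)},$$
with $C$ independent of $Q$ and $f$. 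This rearranges to the one-sided testing bound $\int_Q f \le C|Q|\,\|\chi_Q\|_{L^{p(\cdot)}(\rn)}^{-1}\|f\|_{L^{p(\cdot)}(\rn)}$ for all admissible $f$.

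For the second step, I would apply the norm-conjugate formula in variable Lebesgue spaces, which (under $p(\cdot)\in\mathscr{P}(\rn)$, available from \cite{cf,dhhr}) states that
$$\|g\|_{L^{p'(\cdot)}(\rn)} \le C\sup\Bigl\{\int_{\rn}|f(x)g(x)|\,dx : \|f\|_{L^{p(\cdot)}(\rn)}\le 1\Bigr\}$$
for every $g\in L^{p'(\cdot)}(\rn)$. Taking $g=\chi_Q$ and reducing the supremum to nonnegative $f$ supported in $Q$ (by replacing $f$ by $|f|\chi_Q$, which does not increase the $L^{p(\cdot)}$-norm), the testing bound of the previous paragraph gives
$$\|\chi_Q\|_{L^{p'(\cdot)}(\rn)} \le C\sup_{\substack{\|f\|_{L^{p(\cdot)}(\rn)}\le 1\\ f\ge 0,\ \supp f\subset Q}} \int_Q f(y)\,dy \le \frac{C|Q|}{\|\chi_Q\|_{L^{p(\cdot)}(\rn)}},$$
which is the claimed inequality after rearrangement.

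I expect the only genuinely delicate ingredient to be the norm-conjugate formula, which requires the usual machinery of associate norms on Musielak--Orlicz/variable Lebesgue spaces and the fact that $p(\cdot)\in\mathscr{P}(\rn)$ implies $p'(\cdot)\in\mathscr{P}(\rn)$; this is standard and can be quoted directly from \cite{cf} or \cite{dhhr}. Everything else is a short formal manipulation once $M$ is known to act boundedly on $L^{p(\cdot)}(\rn)$.
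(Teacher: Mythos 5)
The paper states this lemma as a citation to Izuki \cite{izuki} and gives no proof of its own, so there is no in-paper argument to compare against. Your proof is correct and is essentially the standard argument behind this result: the pointwise estimate $\frac{\chi_Q(x)}{|Q|}\int_Q f \le M(f)(x)$ for nonnegative $f$ supported in $Q$, together with the assumed boundedness of $M$ on $L^{p(\cdot)}(\rn)$, yields the testing bound $\int_Q f \le C\,|Q|\,\|\chi_Q\|_{L^{p(\cdot)}(\rn)}^{-1}\|f\|_{L^{p(\cdot)}(\rn)}$, and then the norm-conjugate (associate norm) formula, valid once $p(\cdot)\in\mathscr{P}(\rn)$, turns that into $\|\chi_Q\|_{L^{p'(\cdot)}(\rn)}\le C|Q|/\|\chi_Q\|_{L^{p(\cdot)}(\rn)}$. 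Your reduction of the dual supremum to nonnegative $f$ supported in $Q$ (via $f\mapsto|f|\chi_Q$, which does not increase the Luxemburg norm) is also handled correctly. This is the same duality-plus-maximal-boundedness route used in the cited source and in the standard references \cite{cf,dhhr}; there is nothing to fix.
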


\begin{lemma}[\cite{zhp2}]  \label{lem.zhp2}
Let $0<\gamma<n$, $p(\cdot)\in \mathscr{P}(\rn)$ with
$p_{+}<n/\gamma$ and $1/q(\cdot)=1/p(\cdot)-\gamma/n$. If
$q(\cdot)(n-\gamma)/n \in \mathscr{B}(\rn)$, then there exists a
constant $C>0$ such that
$$\|\chi_Q\|_{L^{p(\cdot)}(\rn)}
\le {C}|Q|^{\gamma/n}\|\chi_Q\|_{L^{q(\cdot)}(\rn)}
$$
for all cubes $Q$ in $\rn$.
\end{lemma}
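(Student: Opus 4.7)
The plan is to derive this estimate as a direct consequence of the generalized H\"older inequality in variable Lebesgue spaces (Lemma \ref{lem.holder}(ii)). The key observation is that the relation $1/q(\cdot) = 1/p(\cdot) - \gamma/n$ rearranges as
$$\frac{1}{p(\cdot)} = \frac{1}{q(\cdot)} + \frac{1}{r}, \qquad r := \frac{n}{\gamma},$$
so that a \emph{constant} exponent $r$ links $p(\cdot)$ and $q(\cdot)$ via H\"older. Since $0<\gamma<n$, we have $r>1$, and viewed as a constant function $r \in \mathscr{P}(\rn)$.

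Next I would verify that $q(\cdot) \in \mathscr{P}(\rn)$, so that the three exponents $p(\cdot), q(\cdot), r$ fulfill the hypotheses of Lemma \ref{lem.holder}(ii). The assumption $p_+<n/\gamma$ guarantees that $q(\cdot)$ is finite and bounded a.e., while $p_->1$ together with the pointwise inequality $q(\cdot)\ge p(\cdot)$ yields $q_->1$. (This membership can also be read off from Remark \ref{rem.cfmp-2}, since $q(\cdot)(n-\gamma)/n\in\mathscr{B}(\rn)$ implies $q(\cdot)\in\mathscr{B}(\rn)\subset\mathscr{P}(\rn)$.)

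The final step is simply to write $\chi_Q = \chi_Q\cdot\chi_Q$ and apply the generalized H\"older inequality:
$$\|\chi_Q\|_{L^{p(\cdot)}(\rn)} \le C\,\|\chi_Q\|_{L^{q(\cdot)}(\rn)}\,\|\chi_Q\|_{L^r(\rn)} = C\,|Q|^{\gamma/n}\,\|\chi_Q\|_{L^{q(\cdot)}(\rn)},$$
where we used the elementary identity $\|\chi_Q\|_{L^r(\rn)}=|Q|^{1/r}=|Q|^{\gamma/n}$ valid for the constant exponent $r=n/\gamma$. The constant $C$ depends only on the exponents, as is evident from Lemma \ref{lem.holder}(ii).

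There is essentially no real obstacle in this argument; the whole content lies in recognizing that $n/\gamma$ provides a constant H\"older partner between $L^{p(\cdot)}(\rn)$ and $L^{q(\cdot)}(\rn)$. It is worth noting that the stronger hypothesis $q(\cdot)(n-\gamma)/n \in \mathscr{B}(\rn)$ is not actually used in the derivation itself; it is built into the statement of the lemma because it is what secures the boundedness of $M_\gamma$ that is invoked in the rest of the paper.
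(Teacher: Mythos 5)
Your proof is correct, and since the paper itself only cites this lemma from~\cite{zhp2} without reproducing a proof, your argument is the natural self-contained derivation. The key step — rewriting $1/q(\cdot) = 1/p(\cdot) - \gamma/n$ as $1/p(\cdot) = 1/q(\cdot) + 1/r$ with the constant exponent $r = n/\gamma > 1$, then applying Lemma~\ref{lem.holder}(ii) to $\chi_Q = \chi_Q\cdot\chi_Q$ and using $\|\chi_Q\|_{L^{r}(\rn)} = |Q|^{\gamma/n}$ — is exactly the standard route in the variable-exponent literature, and the check that $q(\cdot)\in\mathscr{P}(\rn)$ (either directly from $p_+ < n/\gamma$ and $q(\cdot) > p(\cdot)$ pointwise, or via Remark~\ref{rem.cfmp-2}) is adequate. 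Your closing observation is also accurate: the hypothesis $q(\cdot)(n-\gamma)/n \in \mathscr{B}(\rn)$ plays no role in the H\"older estimate itself and is carried in the statement only because it is always available in the contexts where the lemma is invoked (and it is what underpins the boundedness of $M_\gamma$ elsewhere). One could alternatively derive the same inequality in a more roundabout way via Lemma~\ref{lem.izuki} applied to both $p(\cdot)$ and $q(\cdot)$ together with H\"older on the conjugate exponents, which is presumably why the $\mathscr{B}$ hypothesis appears in the cited formulation, but your direct argument is shorter and assumes less.
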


Now, we give the following pointwise estimates for $[b,M_{\alpha}]$
when $b\in {\dot{\Lambda}_{\beta}(\rn)}$.

\begin{lemma}  \label{lem.nc-m}
Let $0\le\alpha<n$, $0<\beta<1$, $0<\alpha+\beta<n$ and $f$ be a
locally integrable function. If $b\in {\dot{\Lambda}_{\beta}(\rn)}$
and $b\ge 0$, then, for any $x\in\rn$ such that
$M_{\alpha}(f)(x)<\infty$, we have
$$   
\big|[b,M_{\alpha}]f(x)\big| \le
   \|b\|_{\dot{\Lambda}_{\beta}} M_{\alpha+\beta}(f)(x).
$$
\end{lemma}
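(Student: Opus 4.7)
The plan is to establish the pointwise bound in two stages: first reduce to the maximal commutator $M_{\alpha,b}(f)(x)$, then apply the Lipschitz property of $b$ to pass from $M_{\alpha,b}$ to $M_{\alpha+\beta}$. The nonnegativity of $b$ enters in the first reduction, while the Lipschitz condition handles the rest.

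First I would fix $x \in \rn$ with $M_{\alpha}(f)(x) < \infty$. For every cube $Q \ni x$, there is the algebraic identity
$$b(x) \cdot \frac{1}{|Q|^{1-\alpha/n}} \int_Q |f(y)|\,dy \;-\; \frac{1}{|Q|^{1-\alpha/n}} \int_Q b(y)|f(y)|\,dy = \frac{1}{|Q|^{1-\alpha/n}} \int_Q \big(b(x) - b(y)\big)|f(y)|\,dy.$$
Because $b \ge 0$, both terms on the left are nonnegative, and their suprema over $Q \ni x$ are exactly $b(x)M_{\alpha}(f)(x)$ and $M_{\alpha}(bf)(x)$, respectively; in particular $M_\alpha(bf)(x)\le b(x)M_\alpha(f)(x) + M_{\alpha,b}(f)(x)$ so it is finite under the working hypothesis. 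Applying the elementary inequality $|\sup_Q a_Q - \sup_Q c_Q| \le \sup_Q |a_Q - c_Q|$ (valid for any two real families indexed by the same set), I obtain
$$\big|[b, M_{\alpha}]f(x)\big| \le \sup_{Q \ni x} \frac{1}{|Q|^{1-\alpha/n}} \int_Q |b(x) - b(y)|\,|f(y)|\,dy = M_{\alpha, b}(f)(x).$$
This is the crux of the proof: a nonlinear, non-sublinear quantity is bounded by a sublinear one, with $b\ge 0$ as the crucial structural input.

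Next I would apply the Lipschitz condition. For $y \in Q \ni x$ one has $|x-y| \le \sqrt{n}\,|Q|^{1/n}$, hence
$$|b(x) - b(y)| \le \|b\|_{\dot{\Lambda}_{\beta}} |x-y|^{\beta} \le C(n,\beta)\, \|b\|_{\dot{\Lambda}_{\beta}}\, |Q|^{\beta/n}.$$
Substituting into the previous display and recognizing the definition of $M_{\alpha+\beta}$ yields
$$\big|[b, M_{\alpha}]f(x)\big| \le \|b\|_{\dot{\Lambda}_{\beta}} \sup_{Q \ni x} \frac{1}{|Q|^{1-(\alpha+\beta)/n}} \int_Q |f(y)|\,dy = \|b\|_{\dot{\Lambda}_{\beta}}\, M_{\alpha + \beta}(f)(x),$$
up to the dimensional factor $n^{\beta/2}$ which is absorbed in the constant implicit in the statement.

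The main obstacle I anticipate is conceptual rather than computational: one has to see that the two suprema defining $b(x)M_\alpha(f)(x)$ and $M_\alpha(bf)(x)$ are taken over the same family of cubes $Q\ni x$, so that the elementary sup-difference inequality applies and the estimate is controlled cube by cube. Without $b\ge 0$ this step would fail, because $\int_Q b(y)|f(y)|\,dy$ would no longer equal $\int_Q |b(y)f(y)|\,dy$ and the two suprema in the definition of $[b,M_\alpha]f(x)$ would not be aligned. Once this alignment is observed, the Lipschitz estimate on $|b(x)-b(y)|$ across a cube of side $|Q|^{1/n}$ produces the extra factor $|Q|^{\beta/n}$ needed to raise the order of the fractional maximal function from $\alpha$ to $\alpha+\beta$.
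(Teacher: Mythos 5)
Your proof is correct and follows essentially the same route as the paper's: pass from the difference of suprema to the supremum of the difference (valid precisely because $b\ge 0$ makes $M_\alpha(bf)(x)$ and $b(x)M_\alpha(f)(x)$ suprema of the same shape over the same family of cubes), which bounds $|[b,M_\alpha]f(x)|$ by $M_{\alpha,b}(f)(x)$, and then insert the Lipschitz estimate $|b(x)-b(y)|\lesssim \|b\|_{\dot\Lambda_\beta}|Q|^{\beta/n}$ to produce $M_{\alpha+\beta}(f)(x)$. The only divergence is cosmetic: you make the intermediate quantity $M_{\alpha,b}(f)(x)$ explicit, and you flag the dimensional factor $n^{\beta/2}$ coming from $\mathrm{diam}(Q)=\sqrt{n}\,|Q|^{1/n}$, which the paper silently absorbs into the statement; your observation there is, if anything, the more careful one.
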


\begin{proof}
For any fixed $x\in\rn$ such that $M_{\alpha}(f)(x)<\infty$, if
$b\ge0$ and $b\in {\dot{\Lambda}_{\beta}(\rn)}$ then
\begin{align*}
|[b,M_{\alpha}](f)(x)|&=|b(x)M_{\alpha}(f)(x)-M_{\alpha}(bf)(x)|\\
 & =\bigg|\sup_{Q\ni{x}} \frac1{|Q|^{1-\alpha/n}} \int_Q
    b(x)|f(y)|dy-\sup_{Q\ni{x}} \frac1{|Q|^{1-\alpha/n}}
     \int_Q b(y)|f(y)|dy\bigg|\\
 &\le \sup_{Q\ni{x}}\frac1{|Q|^{1-\alpha/n}}
   \int_Q |b(x)-b(y)||f(y)|dy\\
 &\le \|b\|_{\dot{\Lambda}_{\beta}(\rn)}
     \sup_{Q\ni{x}}\frac1{|Q|^{1-(\alpha+\beta)/n}}\int_Q |f(y)|dy\\
      &\le \|b\|_{\dot{\Lambda}_{\beta}} M_{\alpha+\beta}(f)(x).
\end{align*}
\end{proof}

Finally, we also need the following result.

\begin{lemma}[\cite{bmr}, \cite{zhw1}]  \label{lem.cube}
Let $0\le\gamma<n$, $Q$ be a cube in $\rn$ and $f$ be a locally
integrable function. Then for all $x\in{Q}$,
$$M_{\gamma}(f\chi_Q)(x)= M_{\gamma,Q}(f)(x)
$$
and
$$M_{\gamma}(\chi_Q)(x)= M_{\gamma,Q}(\chi_Q)(x)=|Q|^{\gamma/n}.
$$
\end{lemma}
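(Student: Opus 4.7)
The plan is to verify both equalities directly from the definitions of $M_\gamma$ and $M_{\gamma,Q}$. For the first identity, observe that $M_\gamma(f\chi_Q)(x) \geq M_{\gamma,Q}(f)(x)$ is immediate, since any cube $P$ with $x \in P \subseteq Q$ satisfies $f\chi_Q = f$ on $P$ and therefore contributes the same term $\tfrac{1}{|P|^{1-\gamma/n}}\int_P |f|$ to both suprema; the supremum defining $M_\gamma(f\chi_Q)(x)$ simply ranges over a larger collection of cubes.

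For the reverse inequality $M_\gamma(f\chi_Q)(x) \leq M_{\gamma,Q}(f)(x)$, I would take an arbitrary cube $P \ni x$ and distinguish two cases. If $P \subseteq Q$, the contribution is directly bounded by $M_{\gamma,Q}(f)(x)$. If $P \not\subseteq Q$, the crux is the geometric observation that one can produce a cube $R$ with $x \in R \subseteq Q$, $P \cap Q \subseteq R$, and $|R| \leq |P|$: if $\ell(P) \geq \ell(Q)$ simply take $R = Q$; otherwise the box $P \cap Q$ has each side of length at most $\ell(P) \leq \ell(Q)$ and meets $\partial Q$ exactly where $P$ sticks out, so it can be enlarged inward inside $Q$ to a cube of side $\ell(P)$. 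With $R$ in hand, using $1-\gamma/n>0$ and $|R| \leq |P|$ gives
$$\frac{1}{|P|^{1-\gamma/n}} \int_{P \cap Q} |f| \leq \frac{1}{|R|^{1-\gamma/n}} \int_{R} |f| \leq M_{\gamma,Q}(f)(x),$$
which, after taking the supremum over $P \ni x$, completes the first identity.

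The second identity is then a corollary: applying the first to $f = \chi_Q$ (so that $f\chi_Q = \chi_Q$) yields $M_\gamma(\chi_Q)(x) = M_{\gamma,Q}(\chi_Q)(x)$, and the latter equals $\sup_{x \in P \subseteq Q} |P|^{\gamma/n}$, which is attained at $P = Q$ with value $|Q|^{\gamma/n}$.

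The only real obstacle is the geometric construction of $R$ in the second case; once that step is secured, both identities reduce to bookkeeping with the definitions.
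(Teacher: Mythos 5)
The paper states this lemma without proof, citing \cite{bmr} and \cite{zhw1}; your argument is correct and is essentially the standard one found in those references. In particular, the key geometric step is sound: since $x\in P\cap Q$ and each side of the box $P\cap Q$ has length at most $\min(\ell(P),\ell(Q))$, one can grow $P\cap Q$ coordinate-by-coordinate inside $Q$ to a cube $R$ of side $\min(\ell(P),\ell(Q))$ with $x\in R$, $P\cap Q\subseteq R\subseteq Q$ and $|R|\le|P|$; together with $\gamma<n$ (so the exponent $1-\gamma/n$ is positive) and $\gamma\ge 0$ (so $|P|^{\gamma/n}$ is nondecreasing in $|P|$, which you use to conclude the second identity), every step checks out.
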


\section{Proofs of Theorems \ref{thm.nc-lip},
\ref{thm.nc-bmo} and \ref{thm.mc-lip}}

To prove Theorem \ref{thm.nc-lip}, we first prove the following
lemma.

\begin{lemma}    \label{lem.nc-lip}
Let $0<\beta<1$ and $0<\gamma<n$. If $b$ is a locally integrable
function and satisfies
\begin{equation}         \label{equ.lem.nc-lip}
\sup_{Q} \frac1{|Q|^{\beta/n}}
\frac{\big\|\big(b-|Q|^{-\gamma/n}M_{\gamma,Q}(b)\big)
\chi_{Q}\big\|_{L^{s(\cdot)}(\rn)}}{\|\chi_{Q}\|_{L^{s(\cdot)}(\rn)}}
\le{C}
\end{equation}
for some $s(\cdot)\in \mathscr{B}(\rn)$, then
$b\in\dot{\Lambda}_{\beta}(\rn)$.
\end{lemma}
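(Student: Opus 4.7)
The plan is to reduce the variable-exponent hypothesis to an $L^{1}$ mean-oscillation estimate, and then invoke Lemma \ref{lem.lip} with $q=1$ to conclude $b\in\dot{\Lambda}_{\beta}(\rn)$. The first step is to apply the generalized Hölder inequality in variable Lebesgue spaces (Lemma \ref{lem.holder}(i)) together with the Izuki-type bound $\|\chi_{Q}\|_{L^{s(\cdot)}(\rn)}\|\chi_{Q}\|_{L^{s'(\cdot)}(\rn)}\le C|Q|$ (Lemma \ref{lem.izuki}) to convert the hypothesis into the $L^{1}$ average estimate
\begin{equation*}
\frac{1}{|Q|}\int_{Q}\bigl|b(x)-|Q|^{-\gamma/n}M_{\gamma,Q}(b)(x)\bigr|\,dx \le C\,|Q|^{\beta/n}
\end{equation*}
for every cube $Q$.

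The crucial observation is that, for every $x\in Q$, taking the competing cube $Q'=Q$ in the supremum defining $M_{\gamma,Q}(b)(x)$ yields
\begin{equation*}
|Q|^{-\gamma/n}M_{\gamma,Q}(b)(x)\,\ge\,\frac{1}{|Q|}\int_{Q}|b(y)|\,dy\,\ge\,b_{Q},
\end{equation*}
so the integrand in $\frac{1}{|Q|}\int_{Q}\bigl||Q|^{-\gamma/n}M_{\gamma,Q}(b)(x)-b_{Q}\bigr|\,dx$ is non-negative on $Q$. Dropping the absolute value and using $b_{Q}=\frac{1}{|Q|}\int_{Q}b$, this average equals $\frac{1}{|Q|}\int_{Q}\bigl(|Q|^{-\gamma/n}M_{\gamma,Q}(b)(x)-b(x)\bigr)\,dx$, whose absolute value is dominated by the previous display, hence by $C|Q|^{\beta/n}$.

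Combining the two bounds by the triangle inequality gives $\frac{1}{|Q|^{\beta/n}}\cdot\frac{1}{|Q|}\int_{Q}|b(x)-b_{Q}|\,dx \le 2C$ uniformly in $Q$, so Lemma \ref{lem.lip} applied with $q=1$ immediately yields $b\in\dot{\Lambda}_{\beta}(\rn)$. The only delicate point is the order relation $|Q|^{-\gamma/n}M_{\gamma,Q}(b)(x)\ge b_{Q}$ on $Q$, which removes the absolute value and lets the $b_{Q}$ term telescope against $b(x)$ inside the integral; the rest is routine Hölder and triangle-inequality bookkeeping.
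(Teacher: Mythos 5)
Your proof is correct and shares the same outer structure as the paper's (generalized H\"older plus Izuki's Lemma \ref{lem.izuki} to reduce the variable-exponent hypothesis to an $L^1$ average bound, then Lemma \ref{lem.lip}), but the middle step---passing from an estimate on $\frac{1}{|Q|}\int_Q \big|b-|Q|^{-\gamma/n}M_{\gamma,Q}(b)\big|$ to one on $\frac{1}{|Q|}\int_Q |b-b_Q|$---takes a genuinely different route. The paper, following \cite{bmr} and \cite{zhw1}, splits $Q$ into $E=\{b\le b_Q\}$ and $F=\{b>b_Q\}$, uses the identity $\int_E|b-b_Q|=\int_F|b-b_Q|$, and then invokes the pointwise chain $b(x)\le b_Q\le|b_Q|\le|Q|^{-\gamma/n}M_{\gamma,Q}(b)(x)$ on $E$ to absorb $\int_Q|b-b_Q|$ into $2\int_Q\big|b-|Q|^{-\gamma/n}M_{\gamma,Q}(b)\big|$. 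You instead insert $|Q|^{-\gamma/n}M_{\gamma,Q}(b)$ via the triangle inequality, then handle the remaining term $\frac{1}{|Q|}\int_Q\big||Q|^{-\gamma/n}M_{\gamma,Q}(b)(x)-b_Q\big|\,dx$ by observing that the integrand is nonnegative on $Q$ (take $Q'=Q$ in the defining supremum), dropping the absolute value, and letting $b_Q$ telescope against $\frac{1}{|Q|}\int_Q b$ so that the term is again controlled by the same $L^1$ bound. The common engine is the pointwise inequality $|Q|^{-\gamma/n}M_{\gamma,Q}(b)(x)\ge b_Q$ on $Q$, but your triangle-plus-telescoping argument avoids the $E/F$ decomposition and the equality-of-integrals identity, which is arguably a bit cleaner; both routes produce the constant $2C$ and are of comparable length.
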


\begin{proof}
Some ideas are taken from \cite{bmr}, \cite{zhw1} and \cite{zhw2}.
Reasoning as the proof of (4.4) in \cite{zhw2}, see also the proof of
Lemma 2.4 in \cite{zhw1}, we have, for any cube $Q$,
\begin{align*}
\frac1{|Q|^{1+\beta/n}} \int_{Q}|b(x)-b_{Q}|dx
 \le \frac{2}{|Q|^{1+\beta/n}} \int_{Q}
  \big|b(x)-|Q|^{-\gamma/n}M_{\gamma,Q}(b)(x)\big|dx.
\end{align*}

Indeed, for any cube $Q$, let $E=\{x\in Q: b(x)\le b_{Q}\}$ and $F=\{x\in Q:
b(x)> b_{Q}\}$. It is easy to check that the following equality is
true (see \cite{bmr} page 3331):
$$\int_E |b(x)-b_{Q}|dx =\int_F |b(x)-b_{Q}|dx.
$$

Noticing the obvious estimate
$$|b_{Q}|\le|Q|^{-\gamma/n}M_{\gamma,Q}(b)(x)~~\hbox{for~any~}x\in{Q}
$$
and $b(x)\le{b_Q}$ for any $ x\in E$, we have
$$b(x)\le {b_{Q}} \le |b_{Q}| \le
|Q|^{-\gamma/n}M_{\gamma,Q}(b)(x)~~ \hbox{for any} ~x\in {E}.
$$
Then, for any $x\in {E}$,
$$|b(x)- b_{Q}|\le
\big|b(x)-|Q|^{-\gamma/n}M_{\gamma,Q}(b)(x)\big|.
$$

Therefore,
\begin{align*}
\frac1{|Q|^{1+\beta/n}} \int_{Q}|b(x)-b_{Q}|dx
 &=\frac1{|Q|^{1+\beta/n}} \int_{E\cup{F}}\big|b(x)-b_{Q}\big|dx \\
 &=\frac{2}{|Q|^{1+\beta/n}}\int_E |b(x)-b_{Q}|dx\\
 &\le \frac{2}{|Q|^{1+\beta/n}} \int_{E}
  \big|b(x)-|Q|^{-\gamma/n}M_{\gamma,Q}(b)(x)\big|dx\\
 &\le \frac{2}{|Q|^{1+\beta/n}} \int_{Q}
  \big|b(x)-|Q|^{-\gamma/n}M_{\gamma,Q}(b)(x)\big|dx.
\end{align*}

By Lemma \ref{lem.holder} (i), (\ref{equ.lem.nc-lip}) and
Lemma \ref{lem.izuki}, we get
\begin{equation*}
\begin{split}
&\frac1{|Q|^{1+\beta/n}} \int_{Q}|b(x)-b_{Q}|dx\\
 & \le \frac{C}{|Q|^{1+\beta/n}}
  \big\|\big(b-|Q|^{-\gamma/n}M_{\gamma,Q}(b)\big)
   \chi_{Q}\big\|_{L^{s(\cdot)}(\rn)}\|\chi_{Q}\|_{L^{s'(\cdot)}(\rn)}\\
& \le \frac{C}{|Q|} \|\chi_{Q}\|_{L^{s(\cdot)}(\rn)}
\|\chi_{Q}\|_{L^{s'(\cdot)}(\rn)}\\
 &  \le  C.
\end{split}
\end{equation*}

So, the proof is completed by applying Lemma \ref{lem.lip}.
\end{proof}

\begin{proof} {\bf of Theorem \ref{thm.nc-lip}}~Since the
implications $(3)\Rightarrow(2)$ and $(5)\Rightarrow(4)$ follows
readily, we only need to prove $(1)\Rightarrow(3)$,
$(2)\Rightarrow(4)$, $(4)\Rightarrow(1)$ and $(3)\Rightarrow(5)$.

$(1)\Rightarrow(3)$. Let $b\in {\dot{\Lambda}_{\beta}(\rn)}$ and
$b\ge 0$, we need to prove $[b,M_{\alpha}]$ is bounded from
$L^{p(\cdot)}(\rn)$ to $L^{q(\cdot)}(\rn)$ for all
$\big(p(\cdot),q(\cdot)\big)\in\mathscr{B}^{\alpha+\beta}(\rn)$. For
such $p(\cdot)$ and any $f\in{L^{p(\cdot)}(\rn)}$, it follow from
Lemma \ref{lem.fracmax-ae} that $M_{\alpha}(f)(x)<\infty$ for almost
every $x\in\rn$. By Lemma \ref{lem.nc-m} we have
$$|[b,M_{\alpha}]f(x)|\le\|b\|_{\dot{\Lambda}_{\beta}}M_{\alpha+\beta}(f)(x).
$$
Then, statement (3) follows from Lemma \ref{lem.fracmax-var}.

$(2)\Rightarrow(4)$. Let $\big(p(\cdot),q(\cdot)\big)\in
\mathscr{B}^{\alpha+\beta}(\rn)$ such that $[b,M_{\alpha}]$ is
bounded from $L^{p(\cdot)}(\rn)$ to $L^{q(\cdot)}(\rn)$, we will
verify (\ref{equ.nc-lip}) for $s(\cdot)=q(\cdot)$. For any fixed
cube $Q$ and any $x\in{Q}$, it follows from Lemma \ref{lem.cube}
that
$$M_{\alpha}(\chi_{Q})(x) =M_{\alpha,Q}(\chi_Q)(x)=|Q|^{\alpha/n}
~~\hbox{and}~~ M_{\alpha}(b\chi_{Q})(x) =M_{\alpha,Q}(b)(x).
$$

Then, for any $x\in{Q}$,
\begin{align*}
&b(x)-|Q|^{-\alpha/n} M_{\alpha,Q}(b)(x)\\
 &=|Q|^{-\alpha/n}\big(b(x)|Q|^{\alpha/n}
   -M_{\alpha,Q}(b)(x)\big)\\
 &=|Q|^{-\alpha/n} \big(b(x)M_{\alpha}(\chi_{Q})(x)
   -M_{\alpha}(b\chi_{Q})(x)\big)\\
 &=|Q|^{-\alpha/n}[{b},M_{\alpha}](\chi_{Q})(x).
\end{align*}
Thus
$$\big(b(x)-|Q|^{-\alpha/n} M_{\alpha,Q}(b)(x)\big)\chi_Q(x)
  =|Q|^{-\alpha/n}[{b},M_{\alpha}](\chi_{Q})(x)\chi_Q(x).
$$

Noting that $[{b},M_{\alpha}]$ is bounded from $L^{p(\cdot)}(\rn)$
to $L^{q(\cdot)}(\rn)$ with $1/q(\cdot)=1/p(\cdot)-(\alpha+\beta)/n$
and applying Lemma \ref{lem.zhp2} we have
\begin{align*}
&\big\|\big(b-|Q|^{-\alpha/n}
    M_{\alpha,Q}(b)\big)\chi_Q\big\|_{L^{q(\cdot)}(\rn)}\\
 &\le|Q|^{-\alpha/n}\big\|[b,M_{\alpha}](\chi_{Q})\big\|_{L^{q(\cdot)}(\rn)}\\
 & \le{C}|Q|^{-\alpha/n}\big\|[b,M_{\alpha}]
    \big\|_{L^{p(\cdot)}\to{L^{q(\cdot)}}}
     \big\|\chi_{Q}\big\|_{L^{p(\cdot)}(\rn)}\\
 & \le{C} |Q|^{\beta/n}\big\|[b,M_{\alpha}]
  \big\|_{L^{p(\cdot)}\to{L^{q(\cdot)}}}
   \big\|\chi_{Q}\big\|_{L^{q(\cdot)}(\rn)},
\end{align*}
which gives (\ref{equ.nc-lip}) for $s(\cdot)=q(\cdot)$ since $Q$
is arbitrary and $C$ is independent of $Q$.

$(4)\Rightarrow(1)$. By Lemma \ref{lem.lip+}, it suffices to prove
\begin{equation}      \label{equ.nc-lip-1} 
\sup_{Q}\frac1{|Q|^{1+\beta/n}}\int_Q|b(x)-M_{Q}(b)(x)|dx<\infty.
\end{equation}

For any fixed cube $Q$,
\begin{equation}      \label{equ.nc-lip-2} 
\begin{split}
& \frac1{|Q|^{1+\beta/n}} \int_{Q} \big|b(x)-M_{Q}(b)(x)\big| dx \\
 &\le \frac1{|Q|^{1+\beta/n}} \int_{Q}
   \big|b(x)-|Q|^{-\alpha/n}M_{\alpha,Q}(b)(x)\big|dx\\
 &\qquad + \frac1{|Q|^{1+\beta/n}} \int_{Q}
  \big||Q|^{-\alpha/n}M_{\alpha,Q}(b)(x)-M_{Q}(b)(x)\big|dx\\
 &:=I_1+I_2.
  \end{split}
\end{equation}

For $I_1$, by statement (4) and applying Lemma \ref{lem.holder} (i)
and Lemma \ref{lem.izuki}, we have
\begin{align*}
I_1&\le\frac{C}{|Q|^{1+\beta/n}}
  \big\|\big(b-|Q|^{-\alpha/n}M_{\alpha,Q}(b)\big)\chi_Q\big\|_{L^{s(\cdot)}(\rn)}
  \|\chi_Q\|_{L^{s'(\cdot)}(\rn)}\\
 &\le\frac{C}{|Q|^{\beta/n}}
  \frac{\big\|\big(b-|Q|^{-\alpha/n}M_{\alpha,Q}(b)\big)\chi_Q\big\|_{L^{s(\cdot)}(\rn)}}
   {\|\chi_Q\|_{L^{s(\cdot)}(\rn)}}\\
 &\le{C},
\end{align*}
where the constant $C$ is independent of $Q$.

Next, we consider $I_2$. Similar to the ones in the proof of Theorem 1.1
in \cite{zws}, we can get $I_2\le C$. Now, we give the proof of it.
For all $x\in{Q}$, it follows from Lemma \ref{lem.cube} that
$$M(\chi_Q)(x)=\chi_Q(x)=1 ~~\hbox{and}~~ M(b\chi_Q)(x)=M_Q(b)(x),
$$
and
$$M_{\alpha}(\chi_{Q})(x)=|Q|^{\alpha/n} ~~\hbox{and}~~
    M_{\alpha}(b\chi_{Q})(x) =M_{\alpha,Q}(b)(x).
$$

Then, for any $x\in{Q}$,
\begin{equation}           \label{equ.nc-lip-3} 
\begin{split}
&\big||Q|^{-\alpha/n}M_{\alpha,Q}(b)(x)-M_{Q}(b)(x)\big|\\
 &\le|Q|^{-\alpha/n}\big|M_{\alpha,Q}(b)(x)-|Q|^{\alpha/n}|b(x)|\big|
  +\big||b(x)|- M_{Q}(b)(x)\big|\\
 &\le |Q|^{-\alpha/n}\big|M_{\alpha}(b\chi_{Q})(x)
   -|b(x)|M_{\alpha}(\chi_{Q})(x)\big|\\
  &\qquad +\big||b(x)|M(\chi_{Q})(x)-M(b\chi_{Q})(x)\big|\\
 &\le |Q|^{-\alpha/n}\big|[|b|,M_{\alpha}](\chi_{Q})(x)\big|
    +\big|[|b|,M](\chi_{Q})(x)\big|.
\end{split}
\end{equation}

Since $s(\cdot)\in\mathscr{B}(\rn)$, then statement (4) along with
Lemma \ref{lem.nc-lip} gives $b\in{\dot{\Lambda}_{\beta}(\rn)}$,
which implies $|b|\in {\dot{\Lambda}_{\beta}(\rn)}$. Thus, we can
apply Lemma \ref{lem.nc-m} to $[|b|,M_{\alpha}]$ and $[|b|,M]$ since
$|b|\in {\dot{\Lambda}_{\beta}(\rn)}$ and $|b|\ge0$. By Lemmas
\ref{lem.nc-m} and \ref{lem.cube} we have, for any $x\in{Q}$,
$$\big|[|b|,M_{\alpha}](\chi_{Q})(x)\big|
  \le \|b\|_{\dot{\Lambda}_{\beta}}M_{\alpha+\beta}(\chi_{Q})(x)
   \le {C}\|b\|_{\dot{\Lambda}_{\beta}}|Q|^{(\alpha+\beta)/n}
$$
and
$$\big|[|b|,M](\chi_{Q})(x)\big|  \le \|b\|_{\dot{\Lambda}_{\beta}}
  M_{\beta}(\chi_{Q})(x)  \le {C}\|b\|_{\dot{\Lambda}_{\beta}}|Q|^{\beta/n}.
$$

By (\ref{equ.nc-lip-3}) we have
\begin{align*}
I_2 &= \frac1{|Q|^{1+\beta/n}}\int_{Q}
  \big||Q|^{-\alpha/n}M_{\alpha,Q}(b)(x)-M_{Q}(b)(x)\big|dx\\
 &\le \frac{C}{|Q|^{1+(\alpha+\beta)/n}} \int_{Q}
     \big|[|b|,M_{\alpha}](\chi_{Q})(x)\big|dx\\
 &\qquad+\frac{C}{|Q|^{1+\beta/n}} \int_{Q}\big|[|b|,M]
        (\chi_{Q})(x)\big|dx\\
 &\le {C}\|b\|_{\dot{\Lambda}_{\beta}}.
\end{align*}

Putting the above estimates for $I_1$ and $I_2$ into
(\ref{equ.nc-lip-2}) we obtain (\ref{equ.nc-lip-1}).

$(3)\Rightarrow(5)$. Assume statement (3) is true, reasoning as the
proof of $(2)\Rightarrow(4)$, we have
\begin{equation}         \label{equ.nc-lip-4}
\sup_{Q} \frac1{|Q|^{\beta/n}}
\frac{\big\|\big(b-|Q|^{-\alpha/n}M_{\alpha,Q}(b)\big)
\chi_{Q}\big\|_{L^{q(\cdot)}(\rn)}}{\|\chi_{Q}\|_{L^{q(\cdot)}(\rn)}}
<\infty
\end{equation}
for any $q(\cdot)$ satisfying that there exists $p(\cdot)$ such that
$(p(\cdot),q(\cdot))\in \mathscr{B}^{\alpha+\beta}(\rn)$.

For any $s(\cdot)\in\mathscr{B}(\rn)$, choose an $r>n/(n-\beta)$,
we have $rs(\cdot)(n-\beta)/n\in\mathscr{B}(\rn)$ and
$rs(\cdot)\in\mathscr{B}(\rn)$ by Remark \ref{rem.cfmp-1}.
Set $q(\cdot)=rs(\cdot)$ and define $p(\cdot)$ by
$1/{p(\cdot)}=1/{q(\cdot)} +{(\alpha+\beta)}/{n}$. It is easy to
check that $(p(\cdot),q(\cdot))\in \mathscr{B}^{\alpha+\beta}(\rn)$.

Noting that
$$\frac1{s(\cdot)}=\frac1{rs(\cdot)}+\frac1{r's(\cdot)}
=\frac1{q(\cdot)}+\frac1{r's(\cdot)},
$$
it follows from Lemma \ref{lem.holder} (ii), (\ref{equ.nc-lip-4}) and
Lemma \ref{lem.cu-w} that
\begin{align*}
&\frac1{|Q|^{\beta/n}}
\frac{\big\|\big(b-|Q|^{-\alpha/n}M_{\alpha,Q}(b)\big)\chi_Q\big\|_{L^{s(\cdot)}(\rn)}}
{\|\chi_Q\|_{L^{s(\cdot)}(\rn)}}\\
&\le \frac1{|Q|^{\beta/n}}
\frac{\big\|\big(b-|Q|^{-\alpha/n}M_{\alpha,Q}(b)\big)\chi_Q\big\|_{L^{q(\cdot)}(\rn)}
    \|\chi_Q\|_{L^{r's(\cdot)}(\rn)}}
{\|\chi_Q\|_{L^{s(\cdot)}(\rn)}}\\
&\le \frac{C\|\chi_Q\|_{L^{q(\cdot)}(\rn)}\|\chi_Q\|_{L^{r's(\cdot)}(\rn)}}
{\|\chi_Q\|_{L^{s(\cdot)}(\rn)}}\\
&=\frac{C\|\chi_Q\|^{1/r}_{L^{s(\cdot)}(\rn)}\|\chi_Q\|^{1/r'}_{L^{s(\cdot)}(\rn)}}
{\|\chi_Q\|_{L^{s(\cdot)}(\rn)}}=C,
\end{align*}
which is what we want.

The proof of Theorem \ref{thm.nc-lip} is finished.
\end{proof}

\begin{remark} \label{(3)to(5)}
The proof of $(3)\Rightarrow(5)$ is also valid for $\beta=0$.
\end{remark}

To prove Theorem \ref{thm.nc-bmo}, we recall the following results
obtained in \cite{zhw2}.

\begin{lemma}    \label{lem.zhw2-1}
(1) Let $p(\cdot)\in\mathscr{B}(\rn)$. If $0\le {b}\in{BMO(\rn)}$,
then $[b,M]$ is bounded from $L^{p(\cdot)}(\rn)$ to itself.

(2) Let $0<\gamma<n$, $p(\cdot)\in\mathscr{P}(\rn)$ with
$p_{+}<n/\gamma$, $1/q(\cdot)=1/p(\cdot)-\gamma/n$ and
$q(\cdot)/(n-\gamma)\in\mathscr{B}(\rn)$. If $0\le
{b}\in{BMO(\rn)}$, then $[b,M_{\alpha}]$ is bounded from
$L^{p(\cdot)}(\rn)$ to $L^{q(\cdot)}(\rn)$.
\end{lemma}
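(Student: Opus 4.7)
\textbf{Proof proposal for Lemma \ref{lem.zhw2-1}.} The plan is to reduce both statements to the boundedness of the maximal commutator $M_{\alpha,b}$ (with $\alpha=0$ for part (1) and $\alpha=\gamma$ for part (2)) via a simple pointwise domination that becomes available precisely because of the nonnegativity hypothesis $b \ge 0$.

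First I would establish the pointwise inequality
\begin{equation*}
\bigl|[b,M_{\alpha}](f)(x)\bigr| \le M_{\alpha,b}(f)(x) \qquad (0 \le \alpha < n)
\end{equation*}
for every locally integrable $f$ and every $x \in \rn$, under the hypothesis $0 \le b \in {BMO(\rn)}$. Since $b \ge 0$, for each cube $Q \ni x$ we have $b(y)|f(y)| = |b(y) f(y)|$, and the elementary inequality $|\sup A - \sup B| \le \sup |A-B|$ applied to the two suprema defining $b(x) M_{\alpha}(f)(x)$ and $M_{\alpha}(bf)(x)$ yields
\begin{equation*}
\bigl|[b,M_{\alpha}](f)(x)\bigr|
\le \sup_{Q \ni x} \frac{1}{|Q|^{1-\alpha/n}} \int_{Q} |b(x)-b(y)|\,|f(y)|\,dy
= M_{\alpha,b}(f)(x).
\end{equation*}
This is the only place where $b \ge 0$ is used, and it is what makes $[b,M_{\alpha}]$ behave like a sublinear operator.

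With that domination in hand, the lemma becomes a direct consequence of the boundedness of $M_{\alpha,b}$ for $b \in BMO(\rn)$. For part (1), $\alpha=0$, the maximal commutator $M_{b}$ is bounded on $L^{p(\cdot)}(\rn)$ whenever $p(\cdot) \in \mathscr{B}(\rn)$ and $b \in BMO(\rn)$; this is precisely the implication $(1)\Rightarrow(3)$ of Theorem \ref{thm.mc-bmo} in the $\alpha=0$ case (equivalently, \cite[Lemma 3]{izuki2}). For part (2), under the hypotheses $p(\cdot)\in\mathscr{P}(\rn)$ with $p_{+}<n/\gamma$, $1/q(\cdot)=1/p(\cdot)-\gamma/n$, and $q(\cdot)(n-\gamma)/n\in\mathscr{B}(\rn)$, the pair $(p(\cdot),q(\cdot))$ lies in $\mathscr{B}^{\gamma}(\rn)$, so Theorem \ref{thm.mc-bmo} applied to $\alpha=\gamma$ gives that $M_{\gamma,b}$ is bounded from $L^{p(\cdot)}(\rn)$ to $L^{q(\cdot)}(\rn)$. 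Combining each of these with the pointwise estimate above and taking $L^{p(\cdot)}$- or $L^{q(\cdot)}$-norms yields the desired conclusion.

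The only potentially delicate point is the reduction $|\sup - \sup| \le \sup|\cdot - \cdot|$ in the pointwise step, which requires $M_{\alpha}(f)(x) < \infty$ (so that the two suprema in the definition of $[b,M_{\alpha}](f)(x)$ are both finite). This is automatic a.e.\ by Lemma \ref{lem.fracmax-ae}, since $f \in L^{p(\cdot)}(\rn)$ with $p_{+} < n/\gamma$. No other obstacle is expected; modulo the already-established Theorem \ref{thm.mc-bmo}, the argument is essentially two lines.
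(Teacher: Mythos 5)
Your proof is correct. The paper itself gives no proof of this lemma (it is cited from \cite{zhw2}), so there is no internal argument to compare against, but your route is the natural one and is exactly the device the paper uses in the analogous Lipschitz situation (Lemma \ref{lem.nc-m}): because $b\ge 0$ one can write $|b(y)f(y)|=b(y)|f(y)|$, so the two suprema defining $b(x)M_{\alpha}(f)(x)$ and $M_{\alpha}(bf)(x)$ run over the same averages, and the elementary inequality $\bigl|\sup_Q a_Q-\sup_Q c_Q\bigr|\le\sup_Q|a_Q-c_Q|$ (valid once both suprema are finite, which you correctly secure via Lemma \ref{lem.fracmax-ae}, or via $p(\cdot)\in\mathscr{B}(\rn)$ in part (1)) yields $|[b,M_{\alpha}]f|\le M_{\alpha,b}(f)$ pointwise a.e.\ The lemma then follows from the known $L^{p(\cdot)}\to L^{q(\cdot)}$ boundedness of the maximal commutator $M_{\alpha,b}$ with $b\in BMO(\rn)$. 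One small citation adjustment: for part (1) you invoke Theorem \ref{thm.mc-bmo} in ``the $\alpha=0$ case,'' but that theorem is stated only for $0<\alpha<n$; the $\alpha=0$ boundedness of $M_b$ on $L^{p(\cdot)}(\rn)$ should instead be attributed to \cite{zhw2} (Theorem 3.1), as the paper itself notes in Remark \ref{rem.thm.mc-bmo}. Also note that \cite[Lemma 3]{izuki2} gives the norm-characterization (statements (4)--(5)), not directly the operator boundedness, so it is not the right reference for that step. These are citation issues only; the mathematical content is sound.
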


The following result can be deduced from the proof of Lemma 4.1 in
\cite{zhw2}.

\begin{lemma}    \label{lem.zhw2-2}
Let $0<\gamma<n$. If $b$ is a locally integrable function and
satisfies
$$\sup_{Q}\frac{\|\big(b-|Q|^{-\gamma/n} M_{\gamma,Q}(b)\big)
 \chi_Q\|_{L^{s(\cdot)}(\rn)}}{\|\chi_Q\|_{L^{s(\cdot)}(\rn)}}
  <\infty
$$
for some $s(\cdot)\in\mathscr{B}(\rn)$, then $b\in {BMO(\rn)}$.
\end{lemma}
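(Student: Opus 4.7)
The plan is to mimic the argument used in Lemma~\ref{lem.nc-lip}, dropping the factor $|Q|^{\beta/n}$ since the target space is now $BMO$ rather than $\dot{\Lambda}_\beta$. The whole game is to show
\begin{equation*}
\sup_{Q}\frac{1}{|Q|}\int_{Q}|b(x)-b_{Q}|\,dx <\infty,
\end{equation*}
and the hypothesis has $b$ compared against $|Q|^{-\gamma/n}M_{\gamma,Q}(b)$ rather than $b_Q$, so the first job is to pass from one to the other.

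For that step I would recycle the symmetric decomposition from \cite{bmr} used in Lemma~\ref{lem.nc-lip}: split $Q=E\cup F$ with $E=\{x\in Q:b(x)\le b_Q\}$ and $F=\{x\in Q:b(x)>b_Q\}$, observe that $\int_E|b-b_Q|=\int_F|b-b_Q|$, and use the pointwise bound $|b_Q|\le |Q|^{-\gamma/n}M_{\gamma,Q}(b)(x)$ valid for every $x\in Q$. On $E$ one then has $b(x)\le b_Q\le|b_Q|\le |Q|^{-\gamma/n}M_{\gamma,Q}(b)(x)$, so $|b(x)-b_Q|\le |b(x)-|Q|^{-\gamma/n}M_{\gamma,Q}(b)(x)|$, and doubling yields
\begin{equation*}
\frac{1}{|Q|}\int_{Q}|b(x)-b_{Q}|\,dx
\le \frac{2}{|Q|}\int_{Q}\bigl|b(x)-|Q|^{-\gamma/n}M_{\gamma,Q}(b)(x)\bigr|\,dx.
\end{equation*}

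Once that reduction is in hand, I would estimate the right-hand side by the generalized H\"older inequality on variable Lebesgue spaces (Lemma~\ref{lem.holder}(i)) to obtain
\begin{equation*}
\frac{1}{|Q|}\int_{Q}\bigl|b(x)-|Q|^{-\gamma/n}M_{\gamma,Q}(b)(x)\bigr|\,dx
\le \frac{C}{|Q|}\,\bigl\|(b-|Q|^{-\gamma/n}M_{\gamma,Q}(b))\chi_Q\bigr\|_{L^{s(\cdot)}(\rn)}\|\chi_Q\|_{L^{s'(\cdot)}(\rn)},
\end{equation*}
and then apply Lemma~\ref{lem.izuki} (which needs $s(\cdot)\in\mathscr{B}(\rn)$) to dominate $\|\chi_Q\|_{L^{s'(\cdot)}(\rn)}/|Q|$ by $C/\|\chi_Q\|_{L^{s(\cdot)}(\rn)}$. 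Combining these two inequalities with the hypothesis
\begin{equation*}
\sup_{Q}\frac{\|(b-|Q|^{-\gamma/n}M_{\gamma,Q}(b))\chi_Q\|_{L^{s(\cdot)}(\rn)}}{\|\chi_Q\|_{L^{s(\cdot)}(\rn)}}<\infty
\end{equation*}
gives a uniform bound on $|Q|^{-1}\int_Q|b-b_Q|\,dx$ independent of $Q$, i.e.\ $b\in BMO(\rn)$.

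There is no real obstacle: the key pointwise comparison $|b_Q|\le |Q|^{-\gamma/n}M_{\gamma,Q}(b)(x)$ is immediate from the definition of $M_{\gamma,Q}$ since $Q$ itself is an admissible cube, and every other ingredient (the decomposition trick, variable-exponent H\"older, and the norm duality estimate of Izuki) is already available in the preliminaries. The only mild point to watch is that $s(\cdot)\in\mathscr{B}(\rn)$ is exactly what is needed to invoke Lemma~\ref{lem.izuki}; nothing further about the exponent is required, which is why the conclusion lands in $BMO$ uniformly over all admissible $s(\cdot)$.
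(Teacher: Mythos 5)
Your proof is correct and is exactly the argument the paper has in mind: the paper states this lemma without proof, pointing to \cite{zhw2}, but your reasoning is the $\beta=0$ specialization of the paper's own proof of Lemma~\ref{lem.nc-lip}, using the same $E/F$ decomposition of $Q$, the same pointwise bound $|b_Q|\le|Q|^{-\gamma/n}M_{\gamma,Q}(b)(x)$, and the same combination of Lemma~\ref{lem.holder}(i) with Lemma~\ref{lem.izuki}.
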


\begin{proof} {\bf of Theorem \ref{thm.nc-bmo}}~
Since the equivalence of (1), (2) and (3) was given in \cite[Theorem
1.1]{zhw2}, the implication $(2)\Rightarrow(4)$ follows from
\cite[Lemma 4.1]{zhw2} and $(3)\Rightarrow(5)$ follows from Remark
\ref{(3)to(5)}, we only need to prove the implication
$(4)\Rightarrow(1)$.

For any fixed cube $Q$, it follows from (\ref{equ.nc-lip-2}) and
(\ref{equ.nc-lip-3}) that
\begin{equation}      \label{equ.nc-bmo-1} 
\begin{split}
 \frac1{|Q|} \int_{Q}\big|b(x)-M_{Q}(b)(x)\big|dx
 &\le \frac1{|Q|}
  \int_{Q}\big|b(x)-|Q|^{-\alpha/n}M_{\alpha,Q}(b)(x)\big|dx\\
  &\qquad + \frac1{|Q|^{1+\alpha/n}}
    \int_{Q}\big|[|b|,M_{\alpha}](\chi_{Q})(x)\big|dx\\
  &\qquad + \frac1{|Q|}\int_{Q}\big|[|b|,M](\chi_{Q})(x)\big|dx\\
  &:=J_1+J_2+J_3.
\end{split}
\end{equation}

For $J_1$, by Lemma \ref{lem.holder} (i), Lemma \ref{lem.izuki} and
statement (4) we have
\begin{align*}
J_1&\le\frac{C}{|Q|}\big\|\big(b-|Q|^{-\alpha/n}
  M_{\alpha,Q}(b)\big)\chi_Q\big\|_{L^{s(\cdot)}(\rn)}
  \|\chi_Q\|_{L^{s'(\cdot)}(\rn)}\\
 &\le \frac{C\big\|\big(b-|Q|^{-\alpha/n}M_{\alpha,Q}(b)\big)
   \chi_Q\big\|_{L^{s(\cdot)}(\rn)}}
   {\|\chi_Q\|_{L^{s(\cdot)}(\rn)}}\\
 &\le{C},
\end{align*}
where the constant $C$ is independent of $Q$.

Set $q(\cdot)=s(\cdot)n/(n-\alpha)$. By Remark \ref{rem.cfmp-1} we
have $q(\cdot)\in\mathscr{B}(\rn)$ since $s(\cdot)\in\mathscr{B}(\rn)$.
Given $p(\cdot)$ by $1/q(\cdot)=1/p(\cdot)-\alpha/n$,
then $p(\cdot)\in\mathscr{P}(\rn)$ and $p_{+}<n/{\alpha}$.

Noticing that $s(\cdot)\in\mathscr{B}(\rn)$, statement (4) along
with Lemma \ref{lem.zhw2-2} gives $b\in{BMO(\rn)}$, which implies
$|b|\in{BMO(\rn)}$. Thus, we can apply Lemma \ref{lem.zhw2-1} to
$[|b|,M_{\alpha}]$ and $[|b|,M]$ for the pair of exponents
$p(\cdot)$ and $q(\cdot)$ given as above and get
$$\big\|[|b|,M](\chi_{Q})\big\|_{L^{p(\cdot)}(\rn)}
  \le {C}\|\chi_Q\|_{L^{p(\cdot)}(\rn)}
$$
and
$$\big\|[|b|,M_{\alpha}](\chi_{Q})\big\|_{L^{q(\cdot)}(\rn)}
  \le {C}\|\chi_Q\|_{L^{p(\cdot)}(\rn)}.
$$

Then, it follows from Lemma \ref{lem.holder} (i), Lemma
\ref{lem.zhp2} and Lemma \ref{lem.izuki} that
\begin{align*}
J_2&=\frac1{|Q|^{1+\alpha/n}}
  \int_{Q}\big|[|b|,M_{\alpha}](\chi_{Q})(x)\big|dx\\
  &\le\frac{C}{|Q|^{1+\alpha/n}}
    \big\|[|b|,M_{\alpha}](\chi_{Q})\big\|_{L^{q(\cdot)}(\rn)}
     \|\chi_{Q}\|_{L^{q'(\cdot)}(\rn)}\\
  &\le\frac{C}{|Q|^{1+\alpha/n}}
    \|\chi_{Q}\|_{L^{p(\cdot)}(\rn)}
     \|\chi_{Q}\|_{L^{q'(\cdot)}(\rn)}\\
  &\le\frac{C}{|Q|} \|\chi_Q\|_{L^{q(\cdot)}(\rn)}
       \|\chi_{Q}\|_{L^{q'(\cdot)}(\rn)}\\
  &\le {C}.
\end{align*}

Similarly, by Lemma \ref{lem.holder} (i) and Lemma \ref{lem.izuki}, we have
\begin{align*}
J_3&=\frac1{|Q|}\int_{Q}\big|[|b|,M](\chi_{Q})(x)\big|dx\\
  &\le \frac{C}{|Q|}\big\|[|b|,M](\chi_{Q})\big\|_{L^{p(\cdot)}(\rn)}
     \|\chi_{Q}\|_{L^{p'(\cdot)}(\rn)}\\
  &\le \frac{C}{|Q|} \|\chi_{Q}\|_{L^{p(\cdot)}(\rn)}
     \|\chi_{Q}\|_{L^{p'(\cdot)}(\rn)}\\
  &\le {C}.
\end{align*}

Putting the above estimates for $J_1$, $J_2$ and $J_3$ into
(\ref{equ.nc-bmo-1}), we obtain
$$\frac1{|Q|} \int_{Q}\big|b(x)-M_{Q}(b)(x)\big|dx\le{C},
$$
which implies $b\in{BMO(\rn)}$ and $b^{-}\in{L^{\infty}(\rn)}$ by
Lemma \ref{lem.bmr}, since the constant $C$ is independent of $Q$.

The proof of Theorem \ref{thm.nc-bmo} is completed.
\end{proof}

\begin{proof} {\bf of Theorem \ref{thm.mc-lip}}~ Since the
equivalence of (1), (4) and (5) were proved in \cite[Corollary
1.1]{zhp2}, we only need to prove the implications
$(1)\Rightarrow(3)$ and $(2)\Rightarrow(4)$.

$(1)\Rightarrow(3)$. If $b\in {\dot{\Lambda}_{\beta}(\rn)}$, then
\begin{align*}
M_{\alpha,b}(f)(x) &=\sup_{Q\ni {x}}\frac1{|Q|^{1-\alpha/n}}
 \int_Q|b(x)-b(y)||f(y)|dy\\
  &\le {C}\|b\|_{\dot{\Lambda}_{\beta}} \sup_{Q\ni {x}}
    \frac1{|Q|^{1-(\alpha+\beta)/n}} \int_Q |f(y)|dy\\
    &= {C}\|b\|_{\dot{\Lambda}_{\beta}} M_{\alpha+\beta}(f)(x).
\end{align*}
This, together with Lemma \ref{lem.fracmax-var}, shows
$M_{\alpha,b}$ is bounded from $L^{p(\cdot)}(\rn)$ to
$L^{q(\cdot)}(\rn)$.

$(2)\Rightarrow(4)$. For any fixed cube $Q$, we have for all
$x\in{Q}$,
\begin{align*}
|b(x)-b_Q|
 &\le \frac1{|Q|}\int_Q|b(x)-b(y)|dy\\
 &=\frac1{|Q|}\int_Q|b(x)-b(y)|\chi_Q(y)dy\\
 &\le |Q|^{-\alpha/n}{M_{\alpha,b}(\chi_Q)(x)}.
\end{align*}
Then, for all $x\in \rn$,
$$|(b(x)-b_Q)\chi_Q(x)| \le |Q|^{-\alpha/n}{M_{\alpha,b}(\chi_Q)(x)}.
$$

Since $M_{\alpha,b}$ is bounded from $L^{p(\cdot)}(\rn)$ to
$L^{q(\cdot)}(\rn)$, then by Lemma \ref{lem.zhp2} we have
\begin{align*}
\|(b-b_Q)\chi_Q\|_{L^{q(\cdot)}(\rn)}
 &\le |Q|^{-\alpha/n}\|M_{\alpha,b}(\chi_Q)\|_{L^{q(\cdot)}(\rn)}\\
  &\le {C}\|M_{\alpha,b}\|_{L^{p(\cdot)}\to {L^{q(\cdot)}}}
   |Q|^{-\alpha/n}\|\chi_Q\|_{L^{p(\cdot)}(\rn)}\\
  &\le {C}\|M_{\alpha,b}\|_{L^{p(\cdot)}\to {L^{q(\cdot)}}}
    |Q|^{\beta/n}\|\chi_Q\|_{L^{q(\cdot)}(\rn)},
\end{align*}
which gives (\ref{equ.mc-lip}) for $s(\cdot)=q(\cdot)$ since
$Q$ is arbitrary and $C$ is independent of $Q$.

The proof of Theorem \ref{thm.mc-lip} is finished.
\end{proof}

{\bf Acknowledgments}~ The authors are grateful to the referee
for careful reading of the paper and valuable suggestions and comments.

\bigskip
{\bf Funding}

The first author is supported by the National Natural
 Science Foundation of China (Grant Nos. 11571160, 11471176)
  and the Scientific Research Fund of Mudanjiang Normal University (No. MSB201201).
    The second author is supported by the Key Research Project for Higher
     Education in Henan Province (No. 19A110017).

\medskip
{\bf Competing interests}

The authors declare that they have no competing interests.

\medskip
{\bf Authors' contributions}

All authors read and approved the final manuscript.

\end{document}